\documentclass{amsart}


\usepackage{amssymb}
\usepackage{graphicx}
\usepackage{MnSymbol}

\usepackage{enumerate}

\usepackage{amsmath,amscd}

\usepackage{amsthm}

\title{Artinian groups of large cardinality}

\author{Samuel M. Corson}
\address{Matematika Saila, UPV/EHU, Sarriena S/N, 48940, Leioa - Bizkaia, Spain}
\email{sammyc973@gmail.com}
\author{Saharon Shelah}
\address{Einstein Institute of Mathematics, The Hebrew University of Jerusalem, Jerusalem 91904 Israel}
\address{Department of Mathematics, Rutgers University, Piscataway, NJ 08854 USA}
\email{shelah@math.huji.ac.il}

\bibliographystyle{te}

\theoremstyle{definition}\newtheorem{theorem}{Theorem}
\theoremstyle{definition}
\theoremstyle{definition}

\theoremstyle{definition}
\theoremstyle{definition}

\theoremstyle{definition}\newtheorem{bigtheorem}{Theorem}

\numberwithin{theorem}{section}
\theoremstyle{definition}
\theoremstyle{definition}\newtheorem{proposition}[theorem]{Proposition}
\theoremstyle{definition}\newtheorem{definition}[theorem]{Definition}
\theoremstyle{definition}
\theoremstyle{definition}
\theoremstyle{definition}
\theoremstyle{definition}
\theoremstyle{definition}\newtheorem{lemma}[theorem]{Lemma}
\theoremstyle{definition}
\theoremstyle{definition}
\theoremstyle{definition}
\theoremstyle{definition}
\theoremstyle{definition}\newtheorem{definitions}[theorem]{Definitions}
\theoremstyle{definition}\newtheorem{notation}[theorem]{Notation}

\newcommand{\Po}{\mathcal{P}}

\newcommand{\cl}{\operatorname{cl}}
\newcommand{\fin}{\operatorname{fin}}
\newcommand{\Fr}{\textbf{Fr}}

\newcommand{\Artin}{\textbf{Artin}}
\newcommand{\artin}{\textbf{artin}}
\newcommand{\fr}{\textbf{fr}}

\begin{document}

\keywords{Artinian group, Artinian semigroup, min condition for subgroups, descending chain condition}
\subjclass[2020]{Primary 20A15, 20E15; Secondary 08A30}
\thanks{The work of the first author was supported by Basque Government Grant IT1483-22 and Spanish Government Grants PID2019-107444GA-I00 and PID2020-117281GB-I00.  The second author thanks for partially supporting this research Israel Science Foundation (ISF) grant number 2320/23.  Paper number 1255 on Shelah's archive.}

\begin{abstract}
A group is Artinian if there is no infinite strictly descending chain of subgroups.  Ol'shanskii has asked whether there are Artinian groups of arbitrarily large cardinality.  We show that this problem is essentially the same as an analogous question, regarding universal algebras, asked by J\'onsson in the 1960s.  We further show that these problems are the same as the so-called free subset problem.  As a result, one can have a consistent strong negative answer (from a large cardinal assumption) as well as a consistent positive answer.
\end{abstract}

\maketitle

\begin{section}{Introduction}

Recall that an algebraic structure (e.g. group, semigroup, ring) is \emph{Artinian} if every strictly descending chain of substructures is finite.  This notion is also known in the literature as the \emph{descending chain condition} \cite[p. 120]{J} and in the group setting as the \emph{min condition for subgroups} \cite[p. 62]{Ol}.  Finite groups and quasicyclic groups $\mathbb{Z}(p^{\infty})$ are Artinian.  Ol'shanskii's Tarski monsters \cite{Ol0} are very stark examples of infinite Artinian groups in which a strictly descending chain of subgroups has length at most three.  Obtraztsov \cite{Obr1} constructed an Artinian group of cardinality $\aleph_1$.  Artinian groups must be torsion, so an Artinian group is also an Artinian semigroup if one forgets about the inversion function $^{-1}$.  More generally, if $G$ is Artinian and $H \leq G$ then there exists $K \leq H$, of the same cardinality as $H$, with $K$ \emph{J\'{o}nsson} (every proper subgroup of $K$ has cardinality strictly smaller than that of $K$).

Alexander Ol'shanskii asks in Kourovka Notebook Problem 15.70 whether there exist Artinian groups of arbitrarily large cardinality \cite{KhMaz}.  It will be shown that positive and negative answers are set theoretically consistent.  First, the question will be moved to a more general setting.  Recall that an \emph{algebra} $\mathbb{M} = (M, \mathcal{F})$ is a set $M$ together with a \emph{signature} $\mathcal{F}$ which is a collection of finitary operations on $M$.  The cardinality of an algebra refers to the cardinality of the underlying set $M$.  Bjarni J\'onsson has asked at which cardinals there is an Artinian algebra with finite signature \cite[Remarks p. 133]{J}, \cite{Wh}.

Using a versatile embedding theorem of Obraztsov, we show that the questions of Ol'shanskii and of J\'onsson are fundamentally related.

\begin{bigtheorem}\label{equivalence}  Each of the following holds.

\begin{enumerate} 

\item Let $m$ be an odd integer which is sufficiently large and $\kappa$ be an infinite cardinal.  The following are equivalent.

\begin{enumerate}[(a)]

\item There is a simple Artinian group of cardinality $\kappa$ and exponent $m$.

\item There is an Artinian algebra of cardinality $\kappa$ and countable signature.

\end{enumerate}

\item There exist Artinian groups of size $\aleph_n$ for each natural number $n < \omega$, and if there exists an Artinian group of size $\aleph_{\alpha}$ there exists one of size $\aleph_{\alpha + 1}$.
\end{enumerate}
\end{bigtheorem}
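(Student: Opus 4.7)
The plan is to prove (1) by using Obraztsov's embedding machinery to translate between algebras of countable signature and simple Artinian groups of odd exponent $m$, and then to derive (2) by a successor-step construction carried out at the level of algebras rather than groups.

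For the easy direction (1)(a) $\Rightarrow$ (1)(b), one observes that a group is an algebra in the finite signature $\{\cdot,{}^{-1},e\}$ whose subalgebras are exactly its subgroups; the descending chain condition on subgroups is therefore the descending chain condition on subalgebras, and finite signature is in particular countable. No further work is required in this direction.

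The real content of (1) lies in the direction (1)(b) $\Rightarrow$ (1)(a). Given an Artinian algebra $\mathbb{M} = (M,\mathcal{F})$ of cardinality $\kappa$ with $|\mathcal{F}|\leq \aleph_0$, I would encode $\mathbb{M}$ into a system of countable groups of exponent $m$ indexed by $M$ (roughly, one ``marker'' group per element, together with additional marker groups recording the graph of each operation on each argument tuple), arranged so that the lattice generated by these markers mirrors the subalgebra lattice of $\mathbb{M}$. Feeding this system into Obraztsov's embedding theorem produces a simple group $G$ of exponent $m$ and cardinality $\kappa$ in which every proper subgroup is, up to conjugation, either cyclic or a subgroup of one of the marker groups. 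A strictly descending chain of subgroups of $G$ then, after peeling off the finitely many cyclic ``cuts'', projects to a strictly descending chain of substructures of $\mathbb{M}$, and DCC on $\mathbb{M}$ forces termination. The principal technical difficulty here is calibrating the encoding so that subgroups of $G$ genuinely correspond to subalgebras of $\mathbb{M}$ (so DCC transfers in the correct direction) and so that the resulting $G$ has cardinality exactly $\kappa$ rather than something larger.

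Part (2) proceeds by induction. The base case $\aleph_0$ is handled by any infinite Artinian group, for instance a quasicyclic group $\mathbb{Z}(p^{\infty})$. The heart of the matter is the successor statement: starting from an Artinian group of size $\aleph_\alpha$, I would apply (1)(a) $\Rightarrow$ (1)(b) to obtain an Artinian algebra $\mathbb{M}$ of size $\aleph_\alpha$ in countable signature, then perform an algebra-level cardinality boost to produce an Artinian algebra $\mathbb{M}'$ of size $\aleph_{\alpha+1}$ in countable signature, and finally apply (1)(b) $\Rightarrow$ (1)(a) to $\mathbb{M}'$ to obtain the desired simple Artinian group of size $\aleph_{\alpha+1}$. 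The boost is where the main work lies: one presumably glues together $\aleph_{\alpha+1}$ copies of $\mathbb{M}$ along a carefully chosen skeleton (a tree of height $\omega$ or a cofinal system indexed by $\omega_{\alpha+1}$) so that every strictly descending chain of substructures of $\mathbb{M}'$ is forced, after finitely many steps, into a single copy of $\mathbb{M}$, at which point DCC for $\mathbb{M}$ takes over. I expect this boost to be the main obstacle, since it is here that the combinatorics of $\aleph_{\alpha+1}$ must be controlled explicitly, and it is precisely this step whose unbounded iteration past $\aleph_\omega$ is what later turns out to be independent of ZFC.
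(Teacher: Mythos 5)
Your overall route for (1) coincides with the paper's: the direction (a) $\Rightarrow$ (b) is exactly the trivial observation you make, and the substantive direction goes through Obraztsov's embedding theorem applied to a family of exponent-$m$ groups indexed by the elements of the algebra. But your sketch of (b) $\Rightarrow$ (a) has two genuine gaps. First, the structure theorem you attribute to the embedding --- every proper subgroup of $G$ is, up to conjugacy, cyclic or contained in one of the (countable) marker groups --- cannot be the one you use: if it held, every proper subgroup of $G$ would be finite (exponent $m$), so $G$ would be Artinian for \emph{any} input algebra, the DCC on $\mathbb{M}$ would never be invoked, and you would get Artinian groups of all infinite cardinalities outright, contradicting Theorem \ref{failure}. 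The version of Obraztsov's theorem actually needed (Proposition \ref{beautifulObr}) admits a third class of subgroups, those conjugate to $\langle f(X)\rangle$ where $f:\Po'(\Omega)\to\Po(\Omega)$ is a \emph{generating function} chosen by the user; the encoding of $\mathbb{M}$ goes entirely into $f$ (via $f(X)=(\bigcup_{\alpha\in\pi(\cl_{\mathbb{M}}(\tau(X)))}G_\alpha)\setminus\{1\}$), not into auxiliary marker groups recording operation graphs, and it is exactly this third class that mirrors the subalgebra lattice. Second, even granting that, each $H_k$ in a descending chain is only conjugate to some $\langle C_k\rangle$ by its own conjugator $g_k$, and turning the $\langle C_k\rangle$ into a strictly descending chain requires showing the successive conjugators can be absorbed into the previous $K_k$; this is the content of Lemma \ref{gettingout} (resting on clauses (iv) and (v) of the embedding theorem), and your phrase ``projects to a strictly descending chain'' hides it.

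For (2) your plan is structurally the paper's: do the successor step at the level of finite-signature algebras and transfer back through the equivalence. However, the boost from $\aleph_\alpha$ to $\aleph_{\alpha+1}$ is precisely Whaley's theorem, and his construction is not a gluing of $\aleph_{\alpha+1}$ copies of $\mathbb{M}$: one keeps the single Artinian algebra on $\kappa=\aleph_\alpha$, extends its operations arbitrarily to $\kappa^+$, and adds one binary operation assembled from bijections $h_\gamma:\gamma\to\kappa$ for $\kappa\le\gamma<\kappa^+$, so that any subalgebra containing an ordinal $\ge\kappa$ has its trace on $\kappa$ pinned down; a descending chain must then either shrink that trace (killed by DCC below) or drop its supremum (killed by well-foundedness). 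Your tree-of-copies scheme would need a comparable mechanism and is not obviously realizable as stated; since the result is in the literature, citing it (as the paper does, together with Whaley's existence of finite-signature Artinian algebras at each $\aleph_n$) is the efficient move. Your base case via $\mathbb{Z}(p^{\infty})$ is fine.
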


For a fixed infinite cardinal $\lambda$ it is easy to see that exactly one of two possibilities can arise.  It could be the case that there exist Artinian algebras of all cardinalities having signature of size at most $\lambda$.  In the other case there exists a cardinal $\kappa$ such that algebras of size $\kappa$ or larger and signature of size at most $\lambda$ are never Artinian, but there are Artinian algebras having signature of size at most $\lambda$ at all cardinals below $\kappa$.  In the latter case we write $\artin(\lambda) = \kappa$ and in the former $\artin(\lambda) = \infty$.  Theorem \ref{equivalence} (1) tells us that there are Artinian groups at all cardinals greater than $0$ and below $\artin(\aleph_0)$, and no Artinian algebras of countable signature at or above $\artin(\aleph_0)$.  So, determining the value of $\artin(\aleph_0)$ simultaneously answers the question of Ol'shanskii and that of J\'onsson.  To establish further results we move into the setting of the free subset problem.

We say a subset $X \subseteq M$ is \emph{free} in the algebra $\mathbb{M} = (M, \mathcal{F})$ if for each $x \in X$ the subalgebra generated by $X \setminus \{x\}$ does not contain the element $x$ \cite{Dev}.  If an algebra has an infinite free subset then it is not Artinian.  Let $\fr(\lambda) = \kappa$ denote the minimal cardinal such that every algebra having size $\kappa$ and at most $\lambda$ functions has an infinite free subset (or write $\fr(\lambda) = \infty$ in case no such $\kappa$ exists).  It is easy to see that $\artin(\lambda) \leq \fr(\lambda)$.  Much of the effort of the current paper goes into showing that equality holds.

\begin{bigtheorem} \label{Artinfree}

For each infinite cardinal $\lambda$ one has $\fr(\lambda) = \artin(\lambda)$.
\end{bigtheorem}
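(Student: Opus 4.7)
The direction $\artin(\lambda) \leq \fr(\lambda)$ is the easy one, already recorded just before the statement: an infinite free subset $X = \{x_n : n < \omega\}$ of an algebra produces the strictly descending chain of subalgebras $\langle X\rangle \supsetneq \langle X \setminus \{x_0\}\rangle \supsetneq \langle X \setminus \{x_0, x_1\}\rangle \supsetneq \cdots$, whose strictness follows from freeness, so such an algebra is non-Artinian. To obtain the reverse inequality $\fr(\lambda) \leq \artin(\lambda)$, I would prove the contrapositive: from any $\mathbb{M} = (M, \mathcal{F})$ with $|M| = \kappa$ and $|\mathcal{F}| \leq \lambda$ having no infinite free subset, manufacture an Artinian algebra of cardinality $\kappa$ and signature of size $\leq \lambda$.

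My strategy is to build such an Artinian algebra on the same underlying set $M$ by enriching $\mathcal{F}$ by at most $\lambda$ further operations to obtain $\mathcal{F}^{+}$, chosen so that $(M, \mathcal{F}^{+})$ is \emph{hereditarily J\'onsson} in the sense that every subalgebra is J\'onsson. Any such algebra is automatically Artinian: along any strictly descending chain of subalgebras $N_0 \supsetneq N_1 \supsetneq \cdots$ each $N_i$ is J\'onsson, so the proper inclusion $N_{i+1} \subsetneq N_i$ forces $|N_{i+1}| < |N_i|$, and cardinals are well-ordered, so the chain must be finite. It therefore suffices to manufacture a hereditarily J\'onsson expansion of $\mathbb{M}$ with at most $\lambda$ additional operations.

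The operations to adjoin are extracted from the latent Skolem content of the no-infinite-free-subset hypothesis: for every countable $X \subseteq M$, some $x \in X$ lies in the $\mathcal{F}$-subalgebra generated by $X \setminus \{x\}$, witnessed by some term of $\mathcal{F}$. My plan is to compile these witnesses, via a L\"owenheim--Skolem/closure argument together with a coding step, into at most $\lambda$ operations that uniformly enforce the J\'onsson property at every subalgebra of $(M, \mathcal{F}^{+})$. The main obstacle is precisely this compression: there are up to $\kappa^{\aleph_0}$ individual witnesses but only $\lambda$ operations available. The technical heart should be a reflection argument exploiting that the original witnesses already come from a signature of size $\leq \lambda$, so sufficient uniformity is present in $\mathbb{M}$ to distill a bounded family of operations that succeeds across all infinite subsets, and in particular across every subalgebra at every cardinality.
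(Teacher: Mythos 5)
Your reduction of the hard inequality to the claim ``every algebra of size $\kappa$ with signature of size $\leq\lambda$ and no infinite free subset expands, by adjoining at most $\lambda$ operations, to an Artinian algebra'' is in the right spirit, and your observation that a hereditarily J\'onsson algebra is Artinian is correct as far as it goes. The fatal problem is that the intermediate object you propose to build provably does not exist once $\kappa$ is uncountable. Identify the underlying set with $\kappa$ and suppose $|\mathcal{F}^{+}|\leq\lambda<\kappa$ with $\kappa$ regular uncountable (the case $\kappa=\aleph_1$, $\lambda=\aleph_0$ already arises and cannot be avoided, since Theorem \ref{equivalence} forces $\fr(\aleph_0)>\aleph_1$). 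The set of ordinals $\delta<\kappa$ closed under every operation of $\mathcal{F}^{+}$ is closed and unbounded in $\kappa$, so one may choose closure ordinals $\omega\leq\delta_0<\delta_1<\kappa$ of the same infinite cardinality. Then $\delta_0\subsetneq\delta_1$ are nested subalgebras with $|\delta_0|=|\delta_1|$, so $\delta_1$ is a subalgebra that is not J\'onsson. Hence no expansion by fewer than $\kappa$ operations is hereditarily J\'onsson, and your plan collapses at exactly the cardinals the theorem concerns; being Artinian is genuinely weaker than having all subalgebras J\'onsson, and the proof must exploit that slack.

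Even setting this aside, the step you flag as ``the main obstacle'' --- compressing up to $\kappa^{\aleph_0}$ witnesses into $\lambda$ operations --- is the entire content of the argument and is not carried out; a L\"owenheim--Skolem/reflection remark does not supply it. The paper does not proceed cardinal by cardinal: it assumes for contradiction that $\kappa=\artin(\lambda)<\fr(\lambda)$, which buys two ingredients your local approach forgoes. First, $\kappa$ is then regular (indeed weakly inaccessible, by Lemma \ref{countablecofinality} and Theorem \ref{cofinality}), which is used to keep $\sup\cl_{\mathbb{M}}(\{\alpha_n\}_{n<\omega})$ below $\kappa$ and to run a minimal-counterexample argument on $\sup\{\alpha_n\}_{n<\omega}$. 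Second, every $\beta<\kappa$ carries an Artinian algebra $\mathbb{N}_{\beta}$, and all of these are woven into the signature via the functions $g_{n,\zeta}$, alongside an $\aleph_1$-length iteration adjoining the Skolem-type functions $h^{\mathbb{M}_{\epsilon+1}}_{r,0}$ and $h^{\mathbb{M}_{\epsilon+1}}_{r,1}$. The final contradiction pushes a putative infinite descending chain down to one with strictly smaller supremum, not one of strictly smaller cardinality. You would need this machinery, or a genuine substitute for it, to close the gap.
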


So our questions so far, and all analogous questions regarding larger signatures, are completely answered modulo an understanding of the function $\fr(\cdot)$.  Fortunately, much is known regarding $\fr(\aleph_0)$, and the most extreme imaginable values can arise.  From a result of the second author \cite{Sh1} we can force (using infinitely many measurable cardinals) $\fr(\aleph_0) = \aleph_{\omega}$, and Theorem \ref{equivalence} (2) tells us that this is the smallest conceivable value.  By a result of Koepke \cite{Koepke} we can say even more.

\begin{bigtheorem}\label{failure}  The following are equiconsistent.

\begin{enumerate}

\item ZFC + ``there exists a measurable cardinal''

\item ZFC + ``there are no Artinian groups of cardinality $\aleph_{\omega}$ or higher''

\end{enumerate}
\end{bigtheorem}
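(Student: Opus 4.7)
The plan is to deduce this equiconsistency by combining Theorems \ref{equivalence} and \ref{Artinfree} with two cited set-theoretic results about the function $\fr(\cdot)$. First I would translate the statement (2) into a statement about $\artin(\aleph_0)$: the assumption ``no Artinian groups of cardinality $\aleph_\omega$ or higher'' gives $\artin(\aleph_0) \leq \aleph_\omega$, while Theorem \ref{equivalence}(2) produces Artinian groups at each $\aleph_n$ and hence $\artin(\aleph_0) \geq \aleph_\omega$. Thus (2) is equivalent to $\artin(\aleph_0) = \aleph_\omega$, which by Theorem \ref{Artinfree} is equivalent to $\fr(\aleph_0) = \aleph_\omega$.

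For the direction from Con(measurable) to Con(2), I would invoke Shelah's forcing construction from \cite{Sh1}: starting with a ground model containing a measurable cardinal, one builds by forcing an extension in which every algebra of cardinality $\aleph_\omega$ with countable signature has an infinite free subset, so that $\fr(\aleph_0) \leq \aleph_\omega$ in the extension. Theorem \ref{equivalence}(2) still holds in the extension (it is a theorem of ZFC), giving Artinian groups at all $\aleph_n$ and hence $\fr(\aleph_0) \geq \aleph_\omega$; combining these yields $\fr(\aleph_0) = \aleph_\omega$. By the reduction above, the extension witnesses (2).

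For the direction from Con(2) to Con(measurable), the assumption combined with the reduction gives a model of ZFC in which $\fr(\aleph_0) = \aleph_\omega$. I would then apply Koepke's inner model theorem from \cite{Koepke}: this free-set property at $\aleph_\omega$ implies the existence of an inner model containing a measurable cardinal, which in turn yields a model of ZFC + ``there exists a measurable cardinal''.

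The main obstacle is not the route through our own results, which is essentially bookkeeping once Theorems \ref{equivalence} and \ref{Artinfree} are in place, but rather the depth of the two cited set-theoretic inputs. Shelah's forcing in \cite{Sh1} requires intricate preparation around the measurable (collapses and iteration), and Koepke's lower bound uses core model machinery to extract a measurable from a free-set phenomenon at a small singular cardinal. Our contribution to the equiconsistency is thus the universal-algebraic and group-theoretic reduction that identifies $\fr(\aleph_0)$ as the controlling quantity; once this identification is made, the two cited theorems supply each half of the equiconsistency.
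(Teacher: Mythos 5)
Your route is essentially the paper's: translate ``no Artinian groups of cardinality $\aleph_\omega$ or higher'' into $\artin(\aleph_0)=\aleph_\omega$ using Theorem \ref{equivalence}(2) for the lower bound, convert this to $\fr(\aleph_0)=\aleph_\omega$ (equivalently $\Fr(\aleph_\omega,\aleph_0)$) via Theorem \ref{Artinfree}, and then quote the known equiconsistency of $\Fr(\aleph_\omega,\aleph_0)$ with a measurable cardinal; this is exactly what the paper does by citing Theorem \ref{Koepke}.

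One attribution in your forward direction needs correcting, and it is not merely cosmetic. Shelah's forcing construction in \cite{Sh1} starts from a ground model with \emph{infinitely many} measurable cardinals, so invoking it as you describe only yields $\mathrm{Con}(\text{ZFC}+\text{infinitely many measurables}) \Rightarrow \mathrm{Con}(2)$, which does not establish the stated equiconsistency with a \emph{single} measurable. The derivation of $\Fr(\aleph_\omega,\aleph_0)$ from just one measurable is Koepke's refinement (Theorem 2.2 of \cite{Koepke}), and the core-model lower bound is his Theorem 4.4; the paper's Theorem \ref{Koepke} packages both halves. With that substitution your argument coincides with the paper's proof.
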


On the positive side we can use a result of Devlin and Paris \cite{DevPar} to obtain the following. 

\begin{bigtheorem} \label{constructibleuniv}  It is consistent that there are Artinian groups of every cardinality greater than $0$.
\end{bigtheorem}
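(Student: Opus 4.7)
The plan is to combine the Devlin--Paris construction with the equivalences established in Theorems \ref{equivalence} and \ref{Artinfree}. Work inside the constructible universe $L$, or more generally inside any model where the Devlin--Paris theorem from \cite{DevPar} applies. That theorem provides, for every infinite cardinal $\kappa$, an algebra of cardinality $\kappa$ with countable signature whose free subsets are all finite. Hence in such a model $\fr(\aleph_0) = \infty$.

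From this the theorem follows in three short moves. First, by Theorem \ref{Artinfree} we have $\artin(\aleph_0) = \fr(\aleph_0) = \infty$, so for every infinite cardinal $\kappa$ there is an Artinian algebra of cardinality $\kappa$ with countable signature. Second, apply Theorem \ref{equivalence} (1) with a fixed sufficiently large odd integer $m$: for each infinite $\kappa$ the existence of an Artinian algebra of cardinality $\kappa$ with countable signature yields a (simple) Artinian group of cardinality $\kappa$ and exponent $m$. Third, finite cardinalities are covered trivially by finite groups, which are obviously Artinian.

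Putting these together, in the model of \cite{DevPar} there is an Artinian group of every cardinality $\geq 1$, establishing the consistency claim. The only real content that is not purely invocation of earlier theorems is the appeal to Devlin--Paris; the main step to verify is that their construction indeed produces algebras of countable (not merely set-sized) signature at every infinite cardinal, which is exactly the hypothesis required by Theorem \ref{equivalence} (1). Everything else is mechanical bookkeeping through the chain of equivalences $\fr \rightsquigarrow \artin \rightsquigarrow$ Artinian groups, together with the remark that finite groups dispose of the finite case.
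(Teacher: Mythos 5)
Your reduction chain---finite groups for the finite cardinalities, then $\fr(\aleph_0) = \infty \Rightarrow \artin(\aleph_0) = \infty$ via Theorem \ref{Artinfree}, then simple Artinian groups via Theorem \ref{equivalence}(1)---is exactly the paper's, and that part is fine. The gap is in your very first step. The Devlin--Paris theorem does \emph{not} provide, at every infinite cardinal of $L$, a countable-signature algebra with no infinite free subset. What it gives (Theorem \ref{DevlinwithParis}) is an equivalence: in $L$, $\kappa \xrightarrow[]{} (\omega)_2^{< \omega}$ holds if and only if $\kappa \geq \fr(\aleph_0)$. Consequently, if $L$ contains the first Erd\H{o}s cardinal $E_0$, then $\fr(\aleph_0) = E_0 < \infty$ in $L$: at and above $E_0$ \emph{every} countable-signature algebra has an infinite free subset (cf.\ Proposition \ref{Ramseyfreedom}), and hence by Lemma \ref{freetonoArtinian} there are no Artinian groups of those cardinalities. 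Since the existence of an Erd\H{o}s cardinal in $V$ implies (by Silver's absoluteness result) its existence in $L$, your assertion that $\fr(\aleph_0) = \infty$ holds ``in $L$'' is not a theorem of ZFC; it is an undischarged hypothesis. Your parenthetical worry about whether the Devlin--Paris signature is countable is not the real issue.

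To close the gap you must show it is \emph{consistent} that $V = L$ and no Erd\H{o}s cardinal exists. The paper does this by truncation: since $E_0$ is inaccessible, $(L_{E_0}, \in)$ is a model of ZFC $+$ ($V = L$); and because any $f : [\kappa]^{< \omega} \rightarrow \{0,1\}$ lying in $L_{E_0}$ already lies in $L_{\kappa^+}$, a cardinal $\kappa < E_0$ satisfying $\kappa \xrightarrow[]{} (\omega)_2^{< \omega}$ inside $L_{E_0}$ would satisfy it in $L$, contradicting the minimality of $E_0$. Thus $(L_{E_0}, \in)$ is a model of $V = L$ with no Erd\H{o}s cardinal, and only in such a model does your argument run. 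So the missing idea is precisely this case analysis on whether $E_0$ exists, together with the cutoff construction when it does.
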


For Theorem \ref{constructibleuniv} we use G\"odel's constructible universe $L$.  In $L$ the first nonzero cardinal at which there is no Artinian group would be the first Erd\H{o}s cardinal $E_0$ (see Theorem \ref{constructiblecutoff}).  Since such a cardinal is inaccessible, one can halt the construction of the constructible universe at stage $E_0$ and obtain a model of ZFC + ($V = L$) in which no such cardinal exists.

Regarding groups we summarize: Artinian groups occur unconditionally at the small cardinals $\aleph_n$ where $n < \omega$ (Theorem \ref{equivalence}), can fail at all cardinals beyond (Theorem \ref{failure}), or can exist at all cardinals greater than $0$ (Theorem \ref{constructibleuniv}).  In particular an answer to the questions of Ol'shanskii and J\'onsson is independent of ZFC.

In Section \ref{theequivalence} we prove Theorem \ref{equivalence}.  Section \ref{relationshipArtinfree} we prove Theorem  \ref{Artinfree}.  We also provide a dichotomy for each infinite cardinal $\lambda$: either $\artin(\lambda)$ is of cofinality $\aleph_0$ (as in Theorem \ref{failure} when $\lambda = \aleph_0$) or $\artin(\lambda)$ is weakly inaccessible.  We also prove that at all infinite cardinals below $\fr(\aleph_0)$ there are simple, torsion-free groups having no infinite free subset (Theorem \ref{groupsnoinfinitefree}).  In Section \ref{CD} we give Theorems  \ref{constructibleuniv} and \ref{failure}.

\end{section}

\begin{section}{Proof of Theorem \ref{equivalence}}\label{theequivalence}

Towards proving Theorem \ref{equivalence} we establish some notation and definitions.

\begin{notation}
As is standard, each ordinal is the set of ordinals strictly below itself, so that $4 = \{0, 1, 2, 3\}$ and $\omega + 1 = \{0, 1, 2, \ldots, \omega\}$.  For a set $X$ we let $\Po(X)$ denote the powerset of $X$, $\Po'(X) = \Po(X) \setminus \{\emptyset\}$, and $\Po_{\fin}'(X)$ the collection of nonempty finite subsets of $X$.  If $\mathbb{M} = (M, \mathcal{F})$ is an algebra and $X \subseteq M$ then we write $\cl_{\mathbb{M}}(X)$ for the set of elements of the subalgebra of  $\mathbb{M}$ generated by $X$.  Sometimes it will be convenient to say that a subset $X \subseteq M$ is a subalgebra of $(M, \mathcal{F})$ when we really mean that $(X, \mathcal{F} \upharpoonright X)$ is a subalgebra of $(M, \mathcal{F})$.  If $G$ is a group and $Y$ is a subset of $G$ we will use the conventional notation $\langle Y \rangle$ to denote the subgroup generated by $Y$.
\end{notation}

\begin{definitions}  If $\{G_i\}_{i \in I}$ is a collection of nontrivial groups, where $\{1\} = G_i \cap G_j$ whenever $i \neq j$, we denote $\Omega^1 := \bigcup_{i \in I} G_i$ the \emph{free amalgam} of the collection $\{G_i\}_{i \in I}$.  Let $\Omega = \Omega^1 \setminus \{1\}$.  An \emph{embedding} of $\Omega^1$ to a group $G$ is an injective function $E: \Omega^1 \rightarrow G$ with each restriction $E \upharpoonright G_i$ a homomorphism.  Clearly an embedding $E$ extends to a homomorphism $\phi$ from the free product $*_{i \in I} G_i$ to $G$, and as each $\phi \upharpoonright G_i$ is an isomorphism we can consider each $G_i$ to be a subgroup of $G$ in that situation.
\end{definitions}

\begin{definition}  Let $\{G_i\}_{i \in I}$ be a collection of nontrivial groups, with $\Omega^1$ and $\Omega$ defined as above.  A function $f: \Po'(\Omega) \rightarrow \Po(\Omega)$ is \emph{generating} in case

\begin{enumerate}[(a)]

\item if $X \subseteq G_i$ for some $i \in I$ then $f(X) = \langle X \rangle \setminus \{1\}$;

\item if $X \subseteq \Omega$ is finite with $X \not\subseteq G_i$ for all $i \in I$ then $f(X) = Y$ is a countable subset of $\Omega$ such that $X \subseteq Y$ and if $Z \subseteq Y$ is finite nonempty then $f(Z) \subseteq Y$; and

\item if $X \subseteq \Omega$ is infinite and $X \not\subseteq G_i$ for all $i \in I$ then $f(X) = \bigcup_{Z \in \Po_{\fin}'(X)} f(Z)$.

\end{enumerate}

\end{definition}

It is easy to see that for $X \in \Po'(\Omega)$ we have $f(f(X)) = f(X)$.  If $\mathcal{W}$ is a nonempty  collection of nonempty words in the letters $\Omega$ we let $F(\mathcal{W}) = f(X)$ where $X$ is the set of letters which appear in elements of $\mathcal{W}$.  The following is a special case of a theorem of Obraztsov \cite[Theorem A]{Obr2}.

\begin{proposition}\label{beautifulObr}  Suppose that $\{G_i\}_{i \in I}$ is a collection of nontrivial groups without involutions and that $f$ is a generating function, with $|I| \geq 2$.  Suppose also that $m$ is a sufficiently large odd number (or $m = \infty$).  Then there is a simple group $G$ and an embedding $E: \Omega^1 \rightarrow G$ of the free amalgam which induces a homomorphism $\phi: *_{i \in I} G_i \rightarrow G$ satisfying the following properties:

\begin{enumerate}[(i)]

\item $\phi$ is surjective (so we may consider $\Omega$ as a generating set for $G$);

\item if $h \in G$ is not conjugate in $G$ to an element in one of the groups $G_i$ then $h$ is of order dividing $m$ ($h$ is of infinite order in case $m = \infty$);

\item each subgroup $M$ of $G$ is either cyclic, or conjugate in $G$ to a subgroup of one of the $G_i$, or conjugate in $G$ to a subgroup of form $\langle C \rangle$ where $C = f(X)$ for some $ X \in \Po'(\Omega)$;

\item if $C = f(X)$ for some $X \in \Po'(\Omega)$ and $g \in G$ is given by word $W$ which is of minimal length in the alphabet $\Omega$, then $g \in \langle C \rangle$ if and only if $F(\{W\}) \subseteq C$;

\item if $C = f(X)$ for some $X \in \Po'(\Omega)$, $J = \{i \in I: G_i \cap \langle C \rangle \neq \{1\}\}$, and $W_0 \in W_1^{-1}\langle C \rangle W_1$ where the length of $W_1$ is minimal among elements of $\langle C \rangle W_1$ and $G_j W_1$ for each $j \in J$, then $F(\{W_1\}) \subseteq F(\{W_0\})$.

\end{enumerate}

\end{proposition}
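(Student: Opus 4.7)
The plan is to derive the proposition as a direct application of Obraztsov's Theorem A in \cite{Obr2}, whose deep work (small-cancellation relator imposition in the style of Ol'shanskii and Novikov--Adian) already does the heavy lifting. Our task is therefore a two-step verification: first, to match our input data to his, and second, to read off his conclusions as (i)--(v).

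For the first step I would check that the hypotheses align. The factors $G_i$ are nontrivial and involution-free by assumption, and $m$ is an odd integer chosen large enough (or $\infty$) to activate the Novikov--Adian style torsion imposition. The generating function $f$ plays the role of Obraztsov's prescription of which non-cyclic, non-factor-conjugate subgroups of the eventual quotient should survive: condition (a) enforces that subsets of a single factor generate that factor's subgroup, (b) provides a countable, downward-closed (under finite subsets) prescription so that each $f(X)$ can be realized as the set of non-identity elements of a bona fide subgroup, and (c) extends the prescription to infinite sets as a directed union. The easy idempotency $f(f(X)) = f(X)$ noted just before the proposition is what makes these candidate subgroups well-defined in the quotient.

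For the second step I would unpack Obraztsov's conclusion. Surjectivity in (i) is automatic since $G$ is a quotient of $*_{i \in I} G_i$. The exponent restriction (ii) on elements not conjugate into any factor is the standard torsion output of the construction. The subgroup trichotomy (iii) is exactly the classification Obraztsov proves: every subgroup is cyclic, conjugate into a factor, or conjugate to one of the prescribed $\langle f(X) \rangle$. The normal-form clauses (iv) and (v) come from the diagrammatic analysis built into the construction, which shows that minimal-length representatives in cosets such as $\langle C \rangle W_1$ or $G_j W_1$ have their letter content controlled by the generating function. Simplicity, if not given outright by \cite{Obr2}, follows from (iii) together with $|I| \geq 2$ by a short case analysis ruling out each of the three subgroup types as a proper nontrivial normal subgroup (cyclic and factor-conjugate possibilities force degenerate configurations incompatible with having at least two factors, and a normal $\langle f(X) \rangle$ is ruled out by choosing a test element outside it and using (iv)).

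The main obstacle is entirely contained in \cite{Obr2}; modulo that, the only thing to worry about is the bookkeeping to ensure that our axioms on $f$ are literally (or equivalent to) Obraztsov's requirements, and that his statement indeed yields each of (i)--(v) in the precise form stated here. I expect the closure axiom (b), which asks for countability of $f(X)$ on finite non-homogeneous $X$, to be the pressure point where the match needs most care, since it is what lets the inductive relator-adding construction terminate at a countable stage for each finitely generated subgroup type being designed.
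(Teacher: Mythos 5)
Your proposal matches the paper's treatment exactly: the paper offers no proof of this proposition, stating only that it is a special case of Obraztsov's Theorem A in \cite{Obr2}, which is precisely the derivation you outline. Your additional remarks on hypothesis-matching and on how simplicity could be recovered from clause (iii) are reasonable commentary but go beyond what the paper itself records.
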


\begin{proof}[Proof of Theorem \ref{equivalence}]
For (1) the implication (a) $\Rightarrow$ (b) is trivial, so we show (b) $\Rightarrow$ (a).  We suppose that $m$ is a sufficiently large number (in the sense of Proposition \ref{beautifulObr}) and that there is a universal algebra, of infinite cardinality $\kappa$ and countable signature, which is Artinian.  Let $\{G_{\alpha}\}_{\alpha < \kappa}$ be a collection of groups where each $G_{\alpha}$ is isomorphic to the cyclic group $\mathbb{Z}/m\mathbb{Z}$ of order $m$ and $G_{\alpha} \cap G_{\beta} = \{1\}$ whenever $\alpha \neq \beta$.  For each $\alpha < \kappa$ let $z_{\alpha}$ generate $G_{\alpha}$.  Let $\Omega^1$ be the free amalgam of the groups $\{G_{\alpha}\}_{\alpha < \kappa}$ and $\Omega = \Omega^1 \setminus \{1\}$.  Define $\tau: \Omega \rightarrow \{z_{\alpha}\}_{\alpha < \kappa}$ by $g \mapsto z_{\alpha}$ where $g \in G_{\alpha} \setminus \{1\}$.

By assumption we let $\mathbb{M} = (\{z_{\alpha}\}_{\alpha < \kappa}, \{j_n\}_{n < \omega})$ be an Artinian algebra of countable signature. Take $\pi: \{z_{\alpha}\}_{\alpha < \kappa} \rightarrow \kappa$ to be the map $z_{\alpha} \mapsto \alpha$.  It is straightforward to check that the function $f: \Po'(\Omega) \rightarrow \Po(\Omega)$ defined by

\[
f(X) = \left\{
\begin{array}{ll}
\langle X \rangle \setminus \{1\}
                                            & \text{if } X \subseteq G_{\alpha} \text{ for some }a < \kappa, \\
(\bigcup_{\alpha \in \pi(\cl_{\mathbb{M}}(\tau(X)))} G_{\alpha}) \setminus \{1\}                                           & \text{otherwise}
\end{array}
\right.
\]

\noindent is generating (see the proof of \cite[Theorem 1.1]{C}).  Take $E: \Omega \rightarrow G$ to be an embedding given by Proposition \ref{beautifulObr}, using the prime $p$ for the number $m$.

We immediately know that $G$ is simple, has exponent $m$ (by part (ii) of the proposition), and is of cardinality $\kappa$ (by part (i) of the proposition and the fact that $E$ is an injection and $\kappa$ is infinite).  We must check that $G$ is Artinian.  For this we first prove the following.

\begin{lemma}\label{gettingout}
Suppose that $C = f(X)$ with $X \in \Po'(\Omega)$ and $X \not\subseteq G_{\alpha}$ for each $\alpha < \kappa$.  If $g \in G \setminus \langle C \rangle$ then for all $\alpha < \kappa$ we have $G_{\alpha} \cap g^{-1}\langle C \rangle g = \{1\}$.  
\end{lemma}

\begin{proof}
By how $f(X)$ is defined and clause (iv) of Proposition \ref{beautifulObr} we see that $G_{\beta} \cap \langle C \rangle \neq \{1\}$ implies $G_{\beta} \subseteq C \subseteq \langle C \rangle$.  So if $J = \{\beta < \kappa \mid G_{\beta} \cap \langle C \rangle \neq \{1\}\}$ then $J = \{\beta < \kappa \mid G_{\beta} \leq \langle C \rangle\}$.  Suppose for contradiction that $h \in G_{\alpha} \cap g^{-1}\langle C \rangle g$  is nontrivial for some $g \in G \setminus \langle C \rangle$.  As $h \in G_{\alpha}\setminus \{1\}$ we know that the minimal word in the letters $\Omega$ representing $h$ is simply the word $W_0$ of length $1$ given by $W_0 \equiv h$.

Take $W_1$ to be a word in $\Omega$ which is of minimal length among all expressions in the letters $\Omega$ representing elements of the right coset $\langle C \rangle g$.  If $\beta \in J$ then as $\langle C \rangle W_1 \supseteq (G_{\beta})W_1$ the word $W_1$ will not be longer than a word representing an element in $(G_{\beta})W_1$.  Then we are in the situation of clause (v) of Proposition \ref{beautifulObr}, so $F(\{W_1\}) \subseteq F(\{W_0\}) = f(\{h\}) = \langle h \rangle \setminus \{1\} \subseteq G_{\alpha}$.  So the element $g_1 \in G$ represented by $W_1$ has $g_1 \in G_{\alpha}$, so in fact $g_1hg_1^{-1} \in (G_{\alpha} \cap \langle C \rangle) \setminus \{1\}$, so $G_{\alpha} \leq \langle C \rangle$.  Then $\langle C \rangle g = \langle C \rangle g_1 = \langle C \rangle$, so $g \in \langle C \rangle$, contrary to our assumption.

\end{proof}

Suppose for contradiction that $\{H_k\}_{k < \omega}$ is a strictly descending chain of subgroups of $G$.  Clearly none of the $H_k$ can be cyclic (or more particularly conjugate to a subgroup of one of the $G_{\alpha}$).  By clause (iii) of Proposition \ref{beautifulObr} we can write $g_0^{-1}H_0g_0 = \langle C_0 \rangle$, where $C_0 = f(X_0)$, $X_0 \subseteq \Omega$, $g_0 \in G$ and $X_0$ is not a subset of any $G_{\alpha}$.  Write $K_0 = \langle C_0 \rangle$ and notice that $K_0 > g_0^{-1}H_1g_0$.  By clause (iii) of Proposition \ref{beautifulObr} there exists some $g_1 \in G$ such that $g_1^{-1}g_0^{-1}H_1g_0g_1 = \langle C_1 \rangle$ where $C_1 = f(X_1)$, $X_1 \subseteq \Omega$, and $X_1$ is not a subset of any $G_{\alpha}$.  Picking $\alpha \in \pi(C_1)$ we see that

\begin{center}

$g_1^{-1}K_0g_1 > g_1^{-1}g_0^{-1}H_1g_0g_1 \geq G_{\alpha}$

\end{center}

\noindent so by Lemma \ref{gettingout} we have $g_1 \in K_0$.  Write $K_1 = \langle C_1 \rangle$ and we have $K_0 > K_1$.  Suppose we have selected elements $g_0, \ldots, g_k$ and subsets $C_0, \ldots, C_k$ and $X_0, \ldots, X_k$ in this way as well as defining $K_0 > \cdots > K_k$.  We have $K_k > g_k^{-1}\cdots g_0^{-1}H_{k + 1}g_0\cdots g_k$.  By clause (iii) of Proposition \ref{beautifulObr} we pick $g_{k + 1} \in G$ such that $$K_{k + 1} := g_{k + 1}^{-1}g_k^{-1}\cdots g_0^{-1}H_{k + 1}g_0\cdots g_kg_{k + 1} = \langle C_{k + 1} \rangle$$ where $C_{k + 1} = f(X_{k + 1})$.  We have $K_k > K_{k+1}$ by applying Lemma \ref{gettingout} again.  We now have a strictly descending chain $\{K_k\}_{k < \omega}$ of subgroups with a more familiar description.

As $X_k \not\subseteq G_{\alpha}$ for each $k < \omega$ and $\alpha <\kappa$ we have $f(X_k) = (\bigcup_{\alpha \in \pi(\cl_{\mathbb{M}}(\tau(X_k)))} G_{\alpha})$, and it is easy to check that $\tau(C_k) = \cl_{\mathbb{M}}(\tau(X_k))$.  Thus if $\tau(C_k) = \tau(C_{k + 1})$ then $C_k = f(X_k) = f(X_{k + 1}) = C_{k + 1}$, and therefore $K_k = \langle C_k \rangle = \langle C_{k + 1} \rangle = K_{k + 1}$, a contradiction.  Then $\tau(C_{k + 1})$ is a proper subalgebra of $\tau(C_k)$ for each $k < \omega$, so $\{\tau(C_k)\}_{k < \omega}$ is a strictly descending chain of subalgebras in $\mathbb{M}$, a contradiction.  Thus (b) implies (a).

We argue why Theorem \ref{equivalence} (2) holds.  Whaley has shown that if there is a finite signature Artinian algebra of cardinality $\aleph_{\alpha}$ then there also exists one of the next higher cardinality $\aleph_{\alpha + 1}$ \cite[Theorem 2.3]{Wh}, and Artinian algebras of finite signature exist at $\aleph_n$ for each $n < \omega$.  Thus we apply the equivalence of (a) and (b).

\end{proof}

\end{section}

\begin{section}{Proof of Theorem \ref{Artinfree}}\label{relationshipArtinfree}

In this section we establish Theorem \ref{Artinfree}.  We begin with some definitions and notation.

\begin{definitions}
If $X$ is a set we write $|X|$ for the cardinality of $X$.  If $\lambda$ is a cardinal number we let $[X]^{\lambda} = \{Y \subseteq X \mid |Y| = \lambda\}$ and $[X]^{< \lambda} = \{Y \subseteq X \mid |Y| < \lambda\}$.  Given an algebra $\mathbb{M} = (M, \mathcal{F})$, a subset $X \subseteq M$ is \emph{free} if for each $x \in X$ we have $x \notin \cl_{\mathbb{M}}(X \setminus \{x\})$, recalling that $\cl_{\mathbb{M}}(Y)$ denotes the set of elements in the subalgebra generated by $Y$.  The notation $\Fr_{\mu}(\kappa, \lambda)$, with $\kappa$, $\lambda$, $\mu$ cardinals and $\kappa$ infinite, means that any algebra of cardinality $\kappa$ and signature of size $\leq \mu$ has a free subset of cardinality $\lambda$ \cite[\textsection 4]{Dev}.  We'll write $\Fr(\kappa, \lambda)$ to mean $\Fr_{\aleph_0}(\kappa, \lambda)$.  As is usual, if $\kappa$ is a cardinal we write $\kappa^+$ for the smallest cardinal which is strictly larger than $\kappa$.
\end{definitions}

It is evident that if $\kappa_1 \geq \kappa_0$, $\lambda_0 \geq \lambda_1$, and $\mu_0 \geq \mu_1$ then $\Fr_{\mu_0}(\kappa_0, \lambda_0)$ implies $\Fr_{\mu_1}(\kappa_1, \lambda_1)$.  Also, $\Fr_{\mu}(\kappa, \lambda)$, with $\lambda > 0$, implies $\kappa > \mu$ (simply by considering the algebra on $\kappa$ whose signature consists of all constant functions). For the negative results we will use the following easy lemma.

\begin{lemma}\label{freetonoArtinian}  If $\Fr_{\mu}(\kappa, \aleph_0)$ then there is no Artinian algebra, with signature of size $\leq \mu$, of cardinality $\kappa$ or higher.
\end{lemma}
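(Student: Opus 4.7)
The plan is to derive a contradiction by combining two ingredients already on the table: the monotonicity remark just stated before the lemma, and the noted fact (from the introduction) that an algebra with an infinite free subset fails to be Artinian. So I would suppose for contradiction that $\mathbb{M} = (M, \mathcal{F})$ is an Artinian algebra with $|\mathcal{F}| \leq \mu$ and $|M| \geq \kappa$, and aim to exhibit an infinite strictly descending chain of subalgebras.

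First I would upgrade the hypothesis to the relevant cardinality. By the monotonicity observation, taking $\mu_0 = \mu_1 = \mu$, $\lambda_0 = \lambda_1 = \aleph_0$, $\kappa_0 = \kappa$, $\kappa_1 = |M|$, the statement $\Fr_{\mu}(\kappa, \aleph_0)$ implies $\Fr_{\mu}(|M|, \aleph_0)$. Applied to $\mathbb{M}$, this yields a free subset $X = \{x_n : n < \omega\} \subseteq M$ of cardinality $\aleph_0$.

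Next I would convert $X$ into a strictly descending chain. Set $Y_n := \cl_{\mathbb{M}}(\{x_k : k \geq n\})$ for each $n < \omega$. Clearly $Y_n \supseteq Y_{n + 1}$ since the generating sets are nested. For strictness, note $x_n \in Y_n$ while $Y_{n + 1} \subseteq \cl_{\mathbb{M}}(X \setminus \{x_n\})$, and freeness of $X$ gives $x_n \notin \cl_{\mathbb{M}}(X \setminus \{x_n\})$, so $x_n \notin Y_{n + 1}$. Thus $\{Y_n\}_{n < \omega}$ is an infinite strictly descending chain of subalgebras of $\mathbb{M}$, contradicting Artinianness.

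There is no real obstacle here: the lemma is a packaging of the trivial implication "infinite free subset $\Rightarrow$ non-Artinian" together with the monotonicity of $\Fr_{\mu}(\cdot, \aleph_0)$ in the first coordinate. I would phrase the write-up in just a few lines.
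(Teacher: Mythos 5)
Your proposal is correct and follows essentially the same route as the paper: both arguments reduce to producing an infinite free subset $\{x_n\}_{n<\omega}$ and then taking the strictly descending chain of closures of the tails. The only cosmetic difference is that you handle the case $|M| > \kappa$ by citing the stated monotonicity of $\Fr_{\mu}(\cdot,\aleph_0)$ in the first coordinate, whereas the paper carries out that step by hand (passing to a subalgebra $\cl_{\mathbb{M}}(Y)$ of cardinality exactly $\kappa$, using $\mu < \kappa$) --- the same closure argument that underlies the monotonicity remark itself.
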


\begin{proof}  Suppose the hypotheses and let $\mathbb{M} = (M, \mathcal{F})$ be an algebra, with $|\mathcal{F}| \leq \mu$ and $|M| \geq \kappa$.  Take $Y \subseteq M$ with $|Y| = \kappa$ and let $M' = \cl_{\mathbb{M}}(Y)$.  As $\Fr_{\mu}(\kappa, \aleph_0)$ we know $\mu < \kappa$ and so $|M'| = \kappa$.  Thus we can select $X = \{x_n\}_{n < \omega} \subseteq M'$ which is free in the algebra $(M', \mathcal{F}\upharpoonright M')$.  Letting $M_k = \cl_{\mathbb{M}}(\{x_k, x_{k + 1}, \ldots\})$ we get $\{M_k\}_{k < \omega}$ is a strictly descending chain of subalgebras of $\mathbb{M}$.
\end{proof}

\begin{notation}  If $\mathbb{M}$ is a universal algebra write $\tau(\mathbb{M})$ for the signature, and for $f \in \tau(\mathbb{M})$ we write $\sigma(f)$ for the arity of the function (i.e. $f: M^{\sigma(f)} \rightarrow M$).  For infinite cardinals $\kappa$ and $\lambda$ write $\Artin(\kappa, \lambda)$ if every algebra $\mathbb{M}$ with cardinality $\kappa$ and $|\tau(\mathbb{M})| \leq \lambda$ is not Artinian.  Let $\artin(\lambda) = \min\{\kappa \mid \Artin(\kappa, \lambda)\}$ if this set is nonempty, else write $\artin(\lambda) = \infty$.  Let $\fr(\lambda) = \min \{\kappa \mid \Fr_{\lambda}(\kappa, \aleph_0)\}$ provided this set is not empty and write $\fr(\lambda) = \infty$ otherwise.
\end{notation}

\begin{definitions}  Recall that the \emph{cofinality} of an ordinal $\alpha$ is the smallest cardinality of an unbounded subset of $\alpha$.  A cardinal $\kappa$ is

\begin{itemize}

\item \emph{regular} if its cofinality is equal to $\kappa$,    

\item \emph{limit} if it is the supremum of the cardinals strictly below $\kappa$,

\item \emph{weakly inaccessible} if it is both regular and limit.
\end{itemize}

\end{definitions}

\begin{lemma}\label{Niceinequality}  Suppose $\lambda_1$ is an infinite cardinal.  If $\lambda_1 < \lambda_2 < \kappa = \artin(\lambda_1)$ then $\artin(\lambda_2) = \artin(\lambda_1) = \kappa$.
\end{lemma}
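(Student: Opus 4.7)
The inequality $\artin(\lambda_1) \leq \artin(\lambda_2)$ is immediate from monotonicity: any algebra whose signature has size at most $\lambda_1$ also has signature of size at most $\lambda_2$, so $\Artin(\kappa, \lambda_2) \Rightarrow \Artin(\kappa, \lambda_1)$, and therefore the set defining $\artin(\lambda_2)$ is contained in that defining $\artin(\lambda_1)$.

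For the reverse inequality $\artin(\lambda_2) \leq \kappa$, the plan is to take an arbitrary algebra $\mathbb{M} = (M, \mathcal{F})$ with $|M| = \kappa$ and $|\mathcal{F}| \leq \lambda_2$, and to build on the same underlying set $M$ an algebra $\mathbb{M}^* = (M, \mathcal{F}^*)$ with $|\mathcal{F}^*| \leq \lambda_1$ whose subalgebra lattice coincides with that of $\mathbb{M}$. Since $\kappa = \artin(\lambda_1)$, any such $\mathbb{M}^*$ must fail to be Artinian, so admits an infinite strictly descending chain of subalgebras; by the coincidence of lattices this is a chain of $\mathbb{M}$-subalgebras, contradicting the assumed Artinianness of $\mathbb{M}$.

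The construction of $\mathbb{M}^*$ exploits the hypothesis $\lambda_2 < \kappa$ to parameterize operations by elements of $M$. Fix an injection $\iota: \lambda_2 \hookrightarrow M$, writing $c_\beta = \iota(\beta)$, and enumerate $\mathcal{F} = \{f_\beta\}_{\beta < \lambda_2}$ with arities $n_\beta$. For each $n < \omega$, include in $\mathcal{F}^*$ an $(n+1)$-ary operation
\[
F_n(c_\beta, x_1, \ldots, x_n) = f_\beta(x_1, \ldots, x_n) \quad \text{when } n_\beta = n,
\]
extended by a fixed dummy value on inputs whose first coordinate lies outside $\iota(\lambda_2)$. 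This packs all $\lambda_2$-many operations $\{f_\beta\}$ into countably many operations $\{F_n\}_{n < \omega}$.

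The step I expect to be the main obstacle is guaranteeing that every $\mathbb{M}^*$-subalgebra contains the entire parameter set $\iota(\lambda_2)$, because only then does closure under $\{F_n\}$ coincide with closure under $\mathcal{F}$. To achieve this I would augment $\mathcal{F}^*$ by additional operations (within the budget $\lambda_1 \geq \aleph_0$) designed to spread the sequence $\{c_\beta\}_{\beta < \lambda_2}$ into every nonempty subalgebra—for instance operations $\rho \colon M^k \to M$ whose image contains $\iota(\lambda_2)$—combined with a size/stabilization argument exploiting $\kappa > \lambda_2$ to see that after finitely many steps the terms of any descending $\mathbb{M}^*$-chain are forced to absorb $\iota(\lambda_2)$ entirely. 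Once the lattice coincidence is secured, the contradiction with $\Artin(\kappa, \lambda_1)$ completes the argument.
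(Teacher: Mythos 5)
Your first inequality is fine, and the general shape of your coding (packing the $\lambda_2$-many operations of $\mathbb{M}$ into countably many operations by using elements of the underlying set as parameters) is indeed how the paper proceeds. But the step you flag as the main obstacle is not merely an obstacle: it is impossible as stated, and your proposed fix does not address it. With $|\mathcal{F}^*| \leq \lambda_1$ and all operations finitary, the $\mathbb{M}^*$-subalgebra generated by a finite set has cardinality at most $\lambda_1 < \lambda_2$, so it cannot contain the parameter set $\iota(\lambda_2)$, which has cardinality $\lambda_2$. Hence no choice of additional operations within the budget $\lambda_1$ can force every nonempty $\mathbb{M}^*$-subalgebra to absorb all the parameters, and the intended lattice coincidence cannot be achieved. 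Adding operations whose \emph{image} contains $\iota(\lambda_2)$ does not help, because closure under an operation only constrains its values on tuples drawn from the subalgebra, not its full range; and without further structure the traces of a descending chain on $\iota(\lambda_2)$ need not stabilize, so the ``size/stabilization argument'' you allude to has nothing to run on.

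The missing idea is that the hypothesis $\lambda_2 < \kappa = \artin(\lambda_1)$ gives you more than an injection of $\lambda_2$ into the underlying set: it gives an \emph{Artinian} algebra $\mathbb{N}_2$ of signature size $\lambda_1$ on the parameter set $\lambda_2$ itself, which you never invoke. The paper superimposes $\mathbb{N}_2$ on the parameters (its condition (b)), so that along any descending chain $\{M_n\}$ of subalgebras of the combined algebra the traces $M_n \cap \lambda_2$ are $\mathbb{N}_2$-subalgebras and therefore stabilize; and besides the forward coding $h_n(x_0, \ldots, x_{n-1}, \alpha) = f_{\alpha}(x_0, \ldots, x_{n-1})$ (your $F_n$) it adds a \emph{reverse} coding $h_n'$ which, from $u \cup \{\beta\}$ with $\beta \in \cl_{\mathbb{N}_1}(u)$, computes a single parameter $\alpha < \lambda_2$ with $\beta \in \cl_{\mathbb{M}}(u \cup \{\alpha\})$ and $\alpha \in \cl_{\mathbb{M}}(u \cup \{\beta\})$ (a single parameter suffices because $\mathbb{N}_2$ is arranged so that every finite set of parameters lies in the closure of a singleton). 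The contradiction is then local rather than global: if $\alpha_k \in M_k \setminus M_{k+1}$ is generated from later $\alpha_j$'s in the signature-$\lambda_2$ algebra, the reverse coding puts the one needed parameter $\gamma$ into $M_k \cap \lambda_2$, stabilization of the traces puts $\gamma$ into $M_{k+1}$, and the forward coding then puts $\alpha_k$ into $M_{k+1}$. Without the Artinian structure on the parameters and the reverse coding, your construction does not go through.
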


\begin{proof}
That $\artin(\lambda_2) \geq \artin(\lambda_1) = \kappa$ is clear.  For the other inequality we suppose for contradiction that $\kappa < \artin(\lambda_2)$.  Let $\mathbb{N}_2$ be an algebra which is Artinian on the set $\lambda_2$ with $|\tau(\mathbb{N}_2)| = \lambda_1$ and for every $u \in [\lambda_2]^{< \omega}$ there is some $\alpha < \lambda_2$ such that $\cl_{\mathbb{N}_2}(\{\alpha\}) \supseteq u$ (this latter condition can be added by taking $H: \lambda_2 \rightarrow [\lambda_2]^{< \omega}$ to be a surjection and adding functions $j_n: \lambda_2^n \rightarrow \lambda_2$ for each $0 < n < \omega$ so that $j_n(\alpha, \ldots, \alpha) = \alpha_n$ whenever $\alpha_1 < \ldots < \alpha_{\ell}$ are the elements of $H(\alpha)$ and $1 \leq n \leq \ell$).  Let $\mathbb{N}_1$ be Artinian with underlying set $\kappa$ and $|\tau(\mathbb{N}_1)| = \lambda_2$.

\noindent $(*)$  Let $\mathbb{M}$ be an algebra on $\kappa$ with $|\tau(\mathbb{M})| = \lambda_1$ such that

\begin{enumerate}[(a)]

\item $(\beta < \kappa \wedge u \in [\kappa]^{< \omega} \wedge \beta \in \cl_{\mathbb{N}_1}(u)) \Rightarrow (\exists \alpha < \lambda_2)(\beta \in \cl_{\mathbb{M}}(u \cup \{\alpha\}))$;

\item $(\forall f \in \tau(\mathbb{N}_2))(\exists f' \in \tau(\mathbb{M}))(\sigma(f) = \sigma(f') \wedge f' \upharpoonright \lambda_2^{\sigma(f)} = f)$;

\item $(\beta < \kappa \wedge u \in [\kappa]^{< \omega} \wedge \beta \in \cl_{\mathbb{N}_1}(u)) \Rightarrow (\exists \alpha < \lambda_2)(\beta \in \cl_{\mathbb{M}}(u \cup \{\alpha\}) \wedge \alpha \in \cl_{\mathbb{M}}(u \cup \{\beta\}))$.

\end{enumerate}

Such an $\mathbb{M}$ is easily obtained but we elaborate.  It is clear that (c) $\Rightarrow$ (a), but the properties will be added in stages.  First extend arbitrarily each $f \in \sigma(\mathbb{N}_2)$ to have domain $\kappa^{\sigma(f)}$ and include such an extension in $\tau(\mathbb{M})$, so that (b) is satisfied.  To satisfy (a) we enumerate $\{f_{\gamma}\}_{\gamma < \lambda_2} = \tau(\mathbb{N}_2)$ and for each $n < \omega$ we add a function $h_n: \kappa^{n + 1} \rightarrow \kappa$ such that $h_n(x_0, \ldots, x_{n - 1}, \alpha) = f_{\alpha}(x_0, \ldots, x_{n - 1})$ whenever $\sigma(f_{\alpha}) = n$.  So, if $\beta < \kappa$, $u \in [\kappa]^{< \omega}$ and $\beta \in \cl_{\mathbb{N}_1}(u)$ we select $v \in [\lambda_2]^{< \omega}$ such that $\beta$ is obtained using the functions $\{f_i\}_{i \in v}$ from $u$, and take $\alpha < \lambda_2$ so that $\cl_{\mathbb{N}_2}(\{\alpha\}) \supseteq v$.  Now clearly $\beta \in \cl_{\mathbb{M}}(u \cup \{\alpha\})$ and (a) holds.  To get (c), for each $n < \omega$ we add a function $h_n': \kappa^n \rightarrow \lambda_2$ such that $h_n'(x_0, \ldots, x_{n - 1}) = \alpha$ when $x_{n-1} \in \cl_{\mathbb{N}_1}(\{x_0, \ldots, x_{n - 2}\})$ and $\alpha \in \lambda_2$ is chosen such that $x_{n - 1} \in \cl_{\mathbb{M}}\{x_0, \ldots, x_{n - 2}, \alpha\}$ (such an $\alpha$ exists by (a)), and is defined arbitrarily otherwise.  Now we have (c), so $(*)$ holds.

We will show that this algebra $\mathbb{M}$ is Artinian to obtain the contradiction.  Suppose that $\{M_n\}_{n \in \omega}$ is a strictly descending chain of subalgebras.  Letting $N_{2, n} = M_n \cap \lambda_2$ we know $N_{2, n} \supseteq N_{2, n+1}$ and by (b) each $N_{2, n}$ is a subalgebra of $\mathbb{N}_2$.  As $\mathbb{N}_2$ is Artinian, the sequence $\{N_{2, n}\}_{n < \omega}$ must eventually stabilize, say $n \geq n(*)$ implies $N_{2, n} = N_{2, n(*)}$.

For each $m < \omega$ select $\alpha_m \in M_{m+1} \setminus M_m$.  As $\mathbb{N}_1$ is Artinian there exists some $k \in [n(*), \omega)$  such that $\alpha_k \in \cl_{\mathbb{N}_1}(\{\alpha_j\}_{j > k})$, so select $\ell \in [k + 1, \omega)$ such that $\alpha_k \in \cl_{\mathbb{N}_1}(\{\alpha_{k + 1}, \ldots, \alpha_{\ell}\})$.  By (c) there exists $\gamma < \lambda_2$ such that

\begin{center}
$\gamma \in \cl_{\mathbb{M}}(\{\alpha_{k + 1}, \ldots, \alpha_{\ell}\} \cup \{\alpha_k\}) \wedge \alpha_{k} \in \cl_{\mathbb{M}}(\{\alpha_{k + 1}, \ldots, \alpha_{\ell}\} \cup \{\gamma\}).$
\end{center}

\noindent Therefore $\gamma \in M_k \cap \lambda_2$, so $\gamma \in M_{k + 1}$ because $k \geq n(*)$.  But now $$\alpha_k \in \cl_{\mathbb{M}}(\{\alpha_{k + 1}, \ldots, \alpha_{\ell}\} \cup \{\gamma\}) \subseteq \cl_{\mathbb{M}}(M_{k + 1}) = M_{k + 1}$$ which is a  contradiction.
\end{proof}

The next lemma generalizes \cite[Theorem 2.3]{Wh} and uses the same strategy of argument.

\begin{lemma}\label{successor}  Suppose $\lambda$ and $\kappa$ are infinite cardinals.  If $\kappa < \artin(\lambda)$ then $\kappa^+ < \artin(\lambda)$.
\end{lemma}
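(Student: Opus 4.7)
By hypothesis fix an Artinian algebra $\mathbb{N}$ on $\kappa$ with $|\tau(\mathbb{N})| \leq \lambda$. The plan is to build an Artinian algebra $\mathbb{M}$ on $\kappa^+$ whose subalgebras are forced to have a very rigid shape; this follows Whaley's strategy but exploits the infinite signature budget by attaching $\kappa$ many unary operations. For each $\alpha \in [\kappa, \kappa^+)$, use that $|\alpha + 1| = \kappa$ to fix a bijection $e_\alpha \colon \alpha + 1 \to \kappa$. Define $\tau(\mathbb{M})$ to consist of: (a) for each $g \in \tau(\mathbb{N})$, an extension $\tilde{g} \colon (\kappa^+)^{\sigma(g)} \to \kappa^+$ that agrees with $g$ on $\kappa^{\sigma(g)}$ and returns $0$ on all other inputs; (b) for each $\beta < \kappa$, a unary function $f_\beta$ with $f_\beta(\alpha) = e_\alpha^{-1}(\beta)$ when $\alpha \geq \kappa$ and $f_\beta(\alpha) = \alpha$ otherwise. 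Then $|\tau(\mathbb{M})| \leq \lambda + \kappa$.

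The role of the $f_\beta$'s is this: if $S$ is a subalgebra of $\mathbb{M}$ and $\alpha \in S$ with $\alpha \geq \kappa$, then $S$ contains $\{f_\beta(\alpha) : \beta < \kappa\} = e_\alpha^{-1}(\kappa) = \alpha + 1$, so $[0, \alpha] \subseteq S$. On the other hand, all operations restrict properly to $\kappa$ (the $\tilde{g}$ act as $g$ there and $f_\beta$ as the identity), so any subalgebra of $\mathbb{M}$ contained in $\kappa$ is automatically a subalgebra of $\mathbb{N}$. Combining, every subalgebra $S$ of $\mathbb{M}$ is either a subalgebra of $\mathbb{N}$ sitting in $\kappa$, or an ordinal initial segment of $\kappa^+$ of the form $[0, \delta)$ for some $\delta \in [\kappa + 1, \kappa^+]$ (the two cases of achieved vs.\ unachieved supremum are both captured by writing the segment half-open, using $[0, \gamma] = [0, \gamma + 1)$).

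For DCC, take a strictly descending chain $M_0 \supsetneq M_1 \supsetneq \cdots$. If some $M_{n_0}$ is contained in $\kappa$, then every later $M_n$ is contained in $\kappa$, so the tail is a strictly descending chain of subalgebras of $\mathbb{N}$ and is finite by hypothesis. Otherwise every $M_n$ is an initial segment $[0, \delta_n)$, strict inclusion becomes $\delta_n > \delta_{n+1}$, and the chain is finite by well-foundedness of the ordinals. So $\mathbb{M}$ is Artinian of cardinality $\kappa^+$. If $\lambda \geq \kappa$ then $|\tau(\mathbb{M})| \leq \lambda$ and $\kappa^+ < \artin(\lambda)$ follows at once. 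If $\lambda < \kappa$ then $|\tau(\mathbb{M})| \leq \kappa$ gives $\kappa^+ < \artin(\kappa)$, and Lemma~\ref{Niceinequality} applied with $\lambda_1 = \lambda$ and $\lambda_2 = \kappa$ (the hypothesis $\lambda < \kappa < \artin(\lambda)$ is precisely the current assumption) yields $\artin(\kappa) = \artin(\lambda)$, completing the proof. The only genuinely delicate point is the subalgebra dichotomy in the second paragraph; once that is in hand, the rest is bookkeeping on ordinal descents.
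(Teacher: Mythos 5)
Your proof is correct, but it packages the bijections differently from the paper, and this changes the shape of the argument. The paper encodes all the bijections $h_\gamma\colon \gamma \to \kappa$ (for $\kappa \le \gamma < \kappa^+$) into a \emph{single} binary operation $h$, so the new signature still has size $\lambda$ and the lemma follows in one step; the price is that subalgebras are only constrained by the identity $K \cap \kappa = h_\beta(K\cap\beta)$ for $\beta \in K\setminus\kappa$, and the Artinian verification is Whaley's trichotomy ($K_1\cap\kappa$ a proper subalgebra of $K_0\cap\kappa$, or $\sup K_1 < \sup K_0$, or $\sup K_0 \in K_0\setminus K_1$). You instead spend $\kappa$ many unary operations $f_\beta$, which buys a much stronger rigidity statement --- any subalgebra meeting $[\kappa,\kappa^+)$ is an ordinal initial segment --- so your descending-chain argument collapses to well-foundedness of the ordinals. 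The cost is that $|\tau(\mathbb{M})|$ is $\max(\lambda,\kappa)$ rather than $\lambda$, so when $\lambda<\kappa$ you must invoke Lemma~\ref{Niceinequality} to convert $\kappa^+<\artin(\kappa)$ into $\kappa^+<\artin(\lambda)$. That appeal is legitimate (no circularity, since Lemma~\ref{Niceinequality} does not use this lemma, and the case $\artin(\lambda)=\infty$ makes the conclusion vacuous), but it makes your argument lean on the heavier Lemma~\ref{Niceinequality}, whereas the paper's version is self-contained, keeps the signature budget at $\lambda$ throughout, and directly generalizes Whaley's finite-signature theorem.
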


\begin{proof}  Suppose that $\mathbb{N}_1 = (\kappa, \{f_{\alpha}\}_{\alpha < \lambda})$ is an Artinian algebra.  For each $\alpha < \lambda$ let $\overline{f}_{\alpha} : (\kappa^+)^{\sigma(f_{\alpha})} \rightarrow \kappa^+$ be such that $\overline{f}_{\alpha} \upharpoonright \kappa^{\sigma(f_{\alpha})} = f_{\alpha}$.  For each $\kappa \leq \gamma < \kappa^+$ we let $h_{\gamma}: \gamma \rightarrow \kappa$ be a bijection and define $h: (\kappa^+)^2 \rightarrow \kappa^+$ by

\[
h(\beta, \gamma) = \left\{
\begin{array}{ll}
h_{\beta}(\gamma)		& \text{if } \kappa \leq \beta \text{ and }\gamma < \beta, \\
h_{\gamma}^{-1}(\beta)	& \text{if } \beta < \kappa \leq \gamma,\\
0 				& \text{otherwise.}
\end{array}
\right.
\]

\noindent Let $\mathbb{M} = (\kappa^+, \{\overline{f}_{\alpha}\}_{\alpha < \lambda} \cup \{h\})$.  It is clear that if $K$ is a subalgebra of $\mathbb{M}$ then $K \cap \kappa$ is also a subalgebra both of $\mathbb{N}_1$ and of $\mathbb{M}$.  Also, if $\kappa \leq \beta$ and $\beta$ is an element of subalgebra $K$ then $K \cap \kappa = h_{\beta}(K \cap \beta)$.

Showing that $\mathbb{M}$ is Artinian is now done precisely as in \cite{Wh}.  One checks that if $K_0$ is a proper subalgebra of $\mathbb{M}$ and $K_1$ is a proper subalgebra of $K_0$ then at least one of the following holds

\begin{enumerate}[(i)]

\item $K_1 \cap \kappa$ is a proper subalgebra of $K_0 \cap \kappa$;

\item $\sup K_1 < \sup K_0$;

\item $\sup K_0 \in K_0 \setminus K_1$.

\end{enumerate}

\noindent That $\mathbb{M}$ is Artinian follows immediately.

\end{proof}

\begin{lemma} \label{limitcardinal}  Suppose $\lambda$ is an infinite cardinal.  Then $\artin(\lambda)$ is a limit cardinal greater than $\lambda$.
\end{lemma}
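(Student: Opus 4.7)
The plan is to establish the two assertions separately: first that $\artin(\lambda) > \lambda$ by producing explicit Artinian witnesses at every cardinality $\leq \lambda$, and then that $\artin(\lambda)$ is not a successor cardinal, which is essentially immediate from Lemma \ref{successor}.

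For the lower bound, I would exhibit, for each cardinal $\kappa \leq \lambda$, an Artinian algebra of cardinality $\kappa$ whose signature has size $\leq \lambda$. Finite underlying sets yield Artinian algebras for trivial reasons, so the only cases requiring work are infinite $\kappa \leq \lambda$. For these, I would take the underlying set to be $\kappa$ and let the signature consist of the $\kappa$ unary constant functions $c_\beta(\alpha) = \beta$, one for each $\beta < \kappa$. The signature has cardinality $\kappa \leq \lambda$, and any nonempty subalgebra must be closed under every $c_\beta$, hence must equal all of $\kappa$. The only subalgebras are therefore $\emptyset$ and $\kappa$, so every descending chain of subalgebras has length at most two and the algebra is Artinian. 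This shows $\artin(\lambda) > \kappa$ for every $\kappa \leq \lambda$, i.e.\ $\artin(\lambda) > \lambda$.

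For the limit-cardinal assertion, I would argue by contradiction: suppose $\artin(\lambda) = \kappa^+$ for some cardinal $\kappa$. The lower bound just established, together with $\lambda \geq \aleph_0$, forces $\artin(\lambda) > \aleph_0$; since the successor of a finite cardinal is finite, $\kappa$ is necessarily an infinite cardinal. Then $\kappa < \kappa^+ = \artin(\lambda)$ puts us in the hypothesis of Lemma \ref{successor}, which yields $\kappa^+ < \artin(\lambda) = \kappa^+$, a contradiction. Hence $\artin(\lambda)$ cannot be a successor cardinal, and being a cardinal strictly larger than $\aleph_0$ it must be a proper limit cardinal. If $\artin(\lambda) = \infty$ the conclusion is interpreted vacuously (or by extending "limit cardinal" to include $\infty$).

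There is essentially no obstacle to clear: all the real content is in Lemma \ref{successor}, and the only non-automatic ingredient is the constants witness used for the lower bound, whose signature size is exactly $\kappa \leq \lambda$ so that it remains admissible for $\artin(\lambda)$.
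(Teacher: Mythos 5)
Your proof is correct and follows essentially the same route as the paper: the constant-function algebra for $\artin(\lambda) > \lambda$ and an appeal to Lemma \ref{successor} to rule out successor cardinals. The only (harmless) differences are that you apply Lemma \ref{successor} directly at $\lambda$ rather than passing through Lemma \ref{Niceinequality} first, and you are slightly more thorough in exhibiting witnesses at every infinite $\kappa \leq \lambda$ rather than only at $\kappa = \lambda$.
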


\begin{proof}
Suppose for contradiction that $\artin(\lambda) = \mu^+$.  Then $\lambda \leq \mu < \artin(\lambda)$ so $\artin(\mu) = \artin(\lambda)$ (using Lemma \ref{Niceinequality} in case $\lambda < \mu$), but by Lemma \ref{successor} $\mu^+ < \artin(\mu) = \mu^+$, contradiction.  To see that $\lambda < \artin(\lambda)$ we consider the algebra $\mathbb{M} = (\lambda, \{f_{\alpha}\}_{\alpha < \lambda})$ where $f_{\alpha}: \lambda \rightarrow \{\alpha\}$.

\end{proof}

\begin{theorem}\label{cofinality}  If $\lambda$ is an infinite cardinal then $\artin(\lambda) > \lambda$, and either has cofinality $\aleph_0$ or is weakly inaccessible.
\end{theorem}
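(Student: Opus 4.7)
The inequality $\artin(\lambda) > \lambda$ and the fact that $\kappa := \artin(\lambda)$ is a limit cardinal are already in Lemma \ref{limitcardinal}. To prove the cofinality dichotomy, it suffices to show that $\operatorname{cf}(\kappa) > \aleph_0$ forces $\operatorname{cf}(\kappa) = \kappa$; arguing by contradiction, assume $\mu := \operatorname{cf}(\kappa)$ satisfies $\aleph_0 < \mu < \kappa$. The plan is to build an Artinian algebra on $\kappa$ with small enough signature to refute $\kappa = \artin(\lambda)$.

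Fix a strictly increasing cofinal sequence $(\kappa_\alpha)_{\alpha<\mu}$ of infinite cardinals below $\kappa$, and for each $\alpha<\mu$ fix an Artinian algebra $\mathbb{N}_\alpha$ on the ordinal $\kappa_\alpha$ with $|\tau(\mathbb{N}_\alpha)| \leq \lambda$ (available since $\kappa_\alpha < \artin(\lambda)$). Define $\mathbb{M}$ on $\kappa$ by including, for each $\alpha<\mu$ and each $f \in \tau(\mathbb{N}_\alpha)$, an extension $\bar f: \kappa^{\sigma(f)} \to \kappa$ that agrees with $f$ on $\kappa_\alpha^{\sigma(f)}$ and outputs $0$ whenever some input is $\geq \kappa_\alpha$. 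Then $|\tau(\mathbb{M})| \leq \mu \cdot \lambda$, and every subalgebra $K \leq \mathbb{M}$ satisfies the key property that $K \cap \kappa_\alpha$ is closed under each $f \in \tau(\mathbb{N}_\alpha)$, hence is a subalgebra of $\mathbb{N}_\alpha$ for every $\alpha<\mu$.

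To show $\mathbb{M}$ is Artinian, suppose $\{K_n\}_{n<\omega}$ is a strictly descending chain of subalgebras and pick $x_n \in K_n \setminus K_{n+1}$. Let $\beta_n<\mu$ be the least ordinal with $x_n < \kappa_{\beta_n}$. Since $\mu$ is a regular uncountable cardinal, the countable sequence $(\beta_n)$ is bounded by some $\beta^*<\mu$; then every $x_n < \kappa_{\beta^*}$, so $\{K_n \cap \kappa_{\beta^*}\}_{n<\omega}$ is a strictly descending chain of subalgebras of $\mathbb{N}_{\beta^*}$, contradicting the Artinian property of $\mathbb{N}_{\beta^*}$. If $\mu \leq \lambda$ then $|\tau(\mathbb{M})| \leq \lambda$ already contradicts $\kappa = \artin(\lambda)$; if $\mu > \lambda$ then Lemma \ref{Niceinequality} yields $\artin(\mu) = \artin(\lambda) = \kappa$, which is contradicted by the Artinian $\mathbb{M}$ on $\kappa$ with signature $\leq \mu$. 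The crux is the bounding of $(\beta_n)$, which hinges essentially on $\mu$ being both regular and uncountable — precisely the feature absent in the $\aleph_0$-cofinality branch of the dichotomy.
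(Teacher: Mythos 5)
Your proof is correct and follows essentially the same route as the paper: glue Artinian algebras on a cofinal sequence $(\kappa_\alpha)$ into an algebra on $\kappa$ with signature of size $\lambda\cdot\operatorname{cf}(\kappa)<\kappa$, use the uncountable regularity of the cofinality to kill any strictly descending chain, and contradict Lemma \ref{Niceinequality}. The only (cosmetic) difference is that you bound the witnesses $x_n$ below a single $\kappa_{\beta^*}$, whereas the paper stabilizes the nondecreasing function $\alpha\mapsto n_\alpha$; both are the same use of regularity.
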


\begin{proof}
We know that $\artin(\lambda)$ is a limit cardinal greater than $\lambda$ by Lemma \ref{limitcardinal}.  Let $\artin(\lambda) = \kappa$ and suppose for contradiction that $\kappa$ is singular of uncountable cofinality.  Let $\{\kappa_{\alpha}\}_{\alpha < \Theta}$ be an increasing set of cardinals with $\Theta < \kappa$ being the cofinality of $\kappa$ and $\kappa = \sup\{\kappa_{\alpha}\}_{\alpha < \Theta}$.  For each $\alpha < \Theta$ let $\mathbb{N}_{\alpha} = (\kappa_{\alpha} , \{f_{\beta, \alpha}\}_{\beta < \lambda})$ be an Artinian algebra.  For each ordered pair $(\beta, \alpha) \in \lambda \times \Theta$ we take $\overline{f}_{\beta, \alpha}: \kappa^{\sigma(f_{\beta, \alpha})} \rightarrow \kappa$ to be such that $\overline{f}_{\beta, \alpha} \upharpoonright \kappa_{\alpha}^{\sigma(f_{\beta, \alpha})} = f_{\beta, \alpha}$ and $\overline{f}_{\beta, \alpha}(\kappa_{\gamma}^{\sigma(f_{\beta, \alpha})}) \subseteq \kappa_{\gamma}$ for $\beta \leq \gamma < \Theta$.  Letting $\mathbb{M} = (\kappa, \{\overline{f}_{\beta, \alpha}\}_{\beta < \lambda, \alpha < \Theta})$ we will show that $\mathbb{M}$ is Artinian, and since $|\tau(\mathbb{M})| = \lambda \cdot \Theta < \kappa$ this will contradict Lemma \ref{Niceinequality}.

Suppose for contradiction that $\{M_n\}_{n < \omega}$ is a strictly decreasing chain of subalgebras of $\mathbb{M}$.  It is clear that for each $\alpha < \Theta$ and $n < \omega$ the set $M_n \cap \kappa_{\alpha}$ is a subalgebra of $\mathbb{N}_{\alpha}$.  Then for each $\alpha < \Theta$ there is a minimal $n_{\alpha} < \omega$ such that if $n_{\alpha} \leq n < \omega$ then $M_n \cap \kappa_{\alpha} = M_{n_{\alpha}} \cap \kappa$.  Note as well that the function $\alpha \mapsto n_{\alpha}$ is nondecreasing, and since $\Theta$ is an uncountable regular cardinal this function must eventually stabilize, say $n_{\alpha} \leq N < \omega$ for all $\alpha < \Theta$.  Then $M_n = M_N$ for all $n \geq N$, a contradiction.
\end{proof}

\begin{lemma}\label{countablecofinality} If $\lambda$ is an infinite cardinal such that $\artin(\lambda)$ has cofinality $\aleph_0$ then $\artin(\lambda) = \fr(\lambda)$.
\end{lemma}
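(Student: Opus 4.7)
Since $\fr(\lambda) \geq \artin(\lambda)$ follows immediately from Lemma \ref{freetonoArtinian}, the plan is to prove the harder inequality $\fr(\lambda) \leq \artin(\lambda)$. Arguing by contradiction, suppose $\fr(\lambda) > \kappa := \artin(\lambda)$; then some $\mathbb{M} = (\kappa, \mathcal{F})$ with $|\mathcal{F}| \leq \lambda$ has no infinite free subset. Since $\kappa$ has cofinality $\aleph_0$, fix strictly increasing $\kappa_n < \kappa$ with $\sup_n \kappa_n = \kappa$, and for each $n$ an Artinian $\mathbb{N}_n = (\kappa_n, \mathcal{G}_n)$ with $|\mathcal{G}_n| \leq \lambda$, which exists because $\kappa_n < \artin(\lambda)$. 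The goal is to construct an Artinian algebra $\mathbb{P} = (\kappa, \mathcal{H})$ with $|\mathcal{H}| \leq \lambda$, which contradicts $\kappa = \artin(\lambda)$.

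Define $\mathbb{P}$ by taking $\mathcal{H} = \mathcal{F} \cup \{\bar{g} : n < \omega,\, g \in \mathcal{G}_n\}$, where each $\bar{g}\colon \kappa^{\sigma(g)} \to \kappa$ extends $g$ and satisfies $\bar{g}(\kappa_m^{\sigma(g)}) \subseteq \kappa_m$ whenever $m \geq n$. Then $|\mathcal{H}| \leq \lambda$, and for every subalgebra $K$ of $\mathbb{P}$ and every $m$, $K \cap \kappa_m$ is a subalgebra of $\mathbb{N}_m$. Given any purported strictly descending chain $\{P_k\}_{k < \omega}$ in $\mathbb{P}$, each chain $\{P_k \cap \kappa_n\}_k$ stabilizes at some $k^*(n)$ by Artinianness of $\mathbb{N}_n$. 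Pick $x_k \in P_k \setminus P_{k+1}$, and after passing to a subsequence assume $x_k \geq \kappa_k$. Since $\{x_j : j > k\} \subseteq P_{k+1}$ while $x_k \notin P_{k+1}$, we have $x_k \notin \cl_{\mathbb{M}}(\{x_j : j > k\})$ for every $k$. The hypothesis supplies some $x_{k^*} \in \cl_{\mathbb{M}}(F)$ with $F$ finite and $F \subseteq \{x_j : j \ne k^*\}$; when $F \subseteq \{x_j : j > k^*\}$, one has $F \subseteq P_{k^*+1}$ and hence $x_{k^*} \in \cl_{\mathbb{P}}(F) \subseteq P_{k^*+1}$, the desired contradiction.

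The main obstacle is the case where $F$ contains some $x_m$ with $m < k^*$. To handle it, I would apply the infinite Ramsey theorem iteratively: for each $r \geq 1$, color each $(r{+}1)$-subset $\{m_0 < \cdots < m_r\}$ of $\omega$ by the set $S \subseteq \{0, \ldots, r\}$ of indices $i$ with $x_{m_i} \in \cl_{\mathbb{M}}(\{x_{m_j} : j \ne i\})$ (finitely many colors), extract an $H_r$ monochromatic for this coloring, and diagonalize to an infinite $H$ eventually homogeneous at every arity. If each monochromatic set $S_r$ is empty, then $\{x_k : k \in H\}$ is free in $\mathbb{M}$, contradicting the hypothesis. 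If some $S_r$ contains $0$, then every $(r{+}1)$-subset in $H$ witnesses $x_{m_0} \in \cl_{\mathbb{M}}(\{x_{m_1}, \ldots, x_{m_r}\}) \subseteq P_{m_0+1}$, contradicting $x_{m_0} \notin P_{m_0+1}$. The subtle case is when every $S_r$ is nonempty but contains no $0$: some ``middle'' index is persistently dependent. The plan is to leverage that $|\cl_{\mathbb{M}}(\{x_{m_0}\})| \leq \lambda < \kappa$ (using Lemma \ref{limitcardinal}) while the sequence $\{x_k\}$ is cofinal in $\kappa$, extracting a structural contradiction through a finer interaction with the Artinian $\mathbb{N}_n$'s; this ``middle-index'' case is the step I expect to be most delicate, and may require enriching $\mathcal{H}$ with additional pairing or projection functions to properly link $\mathbb{M}$'s dependence structure with the bounded $\mathbb{N}_n$-stabilizations.
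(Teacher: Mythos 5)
Your reduction is sound as far as it goes: the inequality $\artin(\lambda)\leq\fr(\lambda)$ is indeed immediate, the contrapositive framing is equivalent to the paper's, and your observations that the traces $P_k\cap\kappa_n$ stabilize, that the separating elements $x_k$ can be pushed above $\kappa_k$, and that a dependence $x_{k^*}\in\cl_{\mathbb{M}}(F)$ with $F\subseteq\{x_j : j>k^*\}$ yields an instant contradiction, all match the skeleton of the paper's argument. But the case you flag as ``most delicate'' --- a witnessing set $F$ containing elements $x_m$ with $m<k^*$ --- is the entire content of the lemma, and your proposal does not close it. The Ramsey extraction does not help here: homogeneity with a persistently dependent middle or last index gives statements of the form $x_{m_i}\in\cl_{\mathbb{M}}(\{x_{m_j}:j\neq i\})$ in which the earlier parameters $x_{m_j}$ ($j<i$) lie outside $P_{m_i+1}$, so no term of the chain is contradicted, and cardinality considerations on $\cl_{\mathbb{M}}(\{x_{m_0}\})$ do not interact with the chain either. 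As written, the argument stalls exactly where you say it might.

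The missing ingredient is that the auxiliary algebra must be enriched \emph{before} the chain is extracted, with Skolem-type functions that compress all parameters below $\kappa_\ell$ into a single \emph{two-way} witness: the paper arranges (its condition $(*)_1$) that whenever $\beta\in\cl_{\mathbb{M}}(u\cup\kappa_\ell)$ there is $\gamma<\kappa_\ell$ with both $\beta\in\cl_{\mathbb{M}}(u\cup\{\gamma\})$ \emph{and} $\gamma\in\cl_{\mathbb{M}}(u\cup\{\beta\})$ (this needs the $\mathbb{N}_n$ to be chosen so that every finite subset of $\kappa_n$ lies in the closure of a singleton, plus an $\omega$-stage closing-off of the signature). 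The second conjunct is what you lack: it places the witness $\gamma$ inside $M_m$. The other missing device is to take the separating elements as minima, $\beta_m=\min(M_m\setminus M_{m+1})$ (after passing to a subsequence so this equals $\min(M_m\setminus M_k)$ for all $k>m$), which yields $M_k\cap\beta_m=M_m\cap\beta_m$ for $k>m$. Then in your problematic case the low-index parameters, all below some $\kappa_\ell\leq\beta_m$, are replaced by a single $\gamma<\kappa_\ell$ with $\gamma\in\cl_{\mathbb{M}}(\{\beta_i : i\geq n\}\cup\{\beta_m\})\subseteq M_m\cap\beta_m=M_n\cap\beta_m$, whence $\beta_m\in\cl_{\mathbb{M}}(\{\beta_i:i\geq n\}\cup\{\gamma\})\subseteq M_n$, the desired contradiction; the free set is then obtained by selecting at most one $\beta$ from each interval $[\kappa_r,\kappa_{r+1})$. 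So your instinct that ``additional pairing or projection functions'' are needed is exactly right, but supplying them (with the two-way property and the minimality of the $\beta_m$) is the substance of the proof rather than a routine finishing touch.
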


\begin{proof}
We already know that $\artin(\lambda) \leq \fr(\lambda)$ by Lemma \ref{freetonoArtinian}.   Suppose that $\artin(\lambda) = \kappa$ has cofinality $\aleph_0$.  Take $\kappa = \sup\{\kappa_n\}_{n < \omega}$ with $\lambda < \kappa_n < \kappa_{n + 1}$.  Letting $\mathbb{M}_* = (\kappa, \{g_{\alpha}\}_{\alpha < \lambda})$ be an arbitrary algebra, we shall find an infinite free subset.  For each $n < \omega$ we take $\mathbb{N}_n = (\kappa_n, \{f_{\alpha, n}\}_{\alpha < \lambda})$ to be Artinian and such that each $u \in [\kappa_n]^{< \omega}$ is included in the closure of a singleton (this extra condition can be added as in Lemma \ref{Niceinequality}).  Take $\overline{f}_{\alpha, n}$ to be an extension of $f_{\alpha, n}$ to all of $\kappa$.  Let $\mathbb{M}_0 = (\kappa, \{g_{\alpha}\}_{\alpha < \lambda} \cup \{\overline{f}_{\alpha, n}\}_{\alpha < \lambda, n < \omega})$.  Having defined $\mathbb{M}_j$, $j < \omega$, we define $\mathbb{M}_{j + 1}$ by adding to $\tau(\mathbb{M}_j)$ functions $\{h_{r, s}\}_{r, s < \omega}$ with $h_{r, s} : \kappa^r \rightarrow \kappa_s$ such that $h_{r, s}(x_0, \ldots, x_{r - 1}) = \alpha$ when $x_{r - 1} \in \cl_{\mathbb{M}_j}(\{x_0, \ldots, x_{r - 2}\} \cup \kappa_s)$ and $\alpha \in \kappa_s$ is chosen minimally such that $\cl_{\mathbb{M}_j}(\{x_0, \ldots, x_{r - 2}\} \cup \{\alpha\})$ (and $h_{r, s}$ is defined arbitrarily otherwise).  Take $\mathbb{M}$ to be the algebra with $\tau(\mathbb{M}) = \bigcup_{j < \omega} \tau(\mathbb{M}_j)$, so $|\tau(\mathbb{M})| \leq \lambda$.  Notice that by construction

\begin{enumerate}

\item[$(*)_1$] if $u \in [\kappa]^{< \omega}$ and $\beta \in \cl_{\mathbb{M}}(u \cup \kappa_{n})$ then for some $\gamma < \kappa_n$ we have $\beta \in \cl_{\mathbb{M}}(u \cup \{\gamma\}) \wedge \gamma \in \cl_{\mathbb{M}}(u \cup \{\beta\})$.

\end{enumerate} 

By our choice of $\kappa$ there exists a strictly descending chain $\{M_n\}_{n \in \omega}$ of subalgebras in $\mathbb{M}$.  For a fixed $n \in \omega$ the sequence of ordinals $\{\min(M_n \setminus M_k)\}_{k \in \omega \setminus (n + 1)}$ is nonincreasing and must therefore stabilize.  By passing to a subsequence, we may assume without loss of generality that 

\begin{enumerate}

\item[$(*)_2$]  $\beta_n = \min(M_n \setminus M_{n + 1}) = \min(M_n \setminus M_k)$ for every $k \geq n + 1$

\end{enumerate}

\noindent and we therefore have that

\begin{enumerate}

\item[$(*)_3$] $\beta_n < \beta_{n + 1}$ for all $n \in \omega$

\end{enumerate}

\noindent since $\beta_n = \min(M_n \setminus M_{n + 1}) = \min(M_n \setminus M_{n + 2})$ and $\beta_{n + 1} \neq \beta_n$ because $ \beta_n \in M_n \setminus M_{n + 1}$ and $\beta_{n + 1} \in M_{n + 1}$.  Since each $\mathbb{N}_k$ is Artinian, and by how the $\beta_n$ were selected, we also know

\begin{enumerate}

\item[$(*)_4$]  for each $k < \omega$ there exists $n < \omega$ such that $\beta_n > \kappa_k$; and

\item[$(*)_5$] $m < k < \omega$ implies $M_k \cap \beta_m = M_m \cap \beta_m$.

\end{enumerate}

Consider the following statement.

$\boxplus$  If $m, \ell, n$ satisfy
\begin{enumerate}[\hspace{1cm}(i)]
\item $\beta_m \in [\kappa_{\ell}, \kappa_{\ell + 1})$;

\item $\beta_n \geq \kappa_{\ell + 1}$; and

\item $\beta_0, \ldots, \beta_{m - 1} < \kappa_{\ell}$

\end{enumerate}
 
then $\beta_m \notin \cl_{\mathbb{M}}(\{\beta_i \mid i < m \text{ or }i \geq n\})$.

\noindent It is clear that $\boxplus$ implies the existence of an infinite free subset, for we can select an infinite subset $\{\gamma_j\}_{j < \omega}$ of the $\{\beta_n\}_{n < \omega}$ such that for every $r < \omega$ the set $[\kappa_r, \kappa_{r + 1}) \cap \{\gamma_j\}_{j < \omega}$ has at most one element (by $(*)_3$ and $(*)_4$).  This selection $\{\gamma_j\}_{j < \omega}$ will be an infinite free subset.

So, suppose for contradiction that the assumptions of $\boxplus$ hold but the conclusion fails.  Then $\beta_m \in \cl{\mathbb{M}}(\{\beta_i \mid i < m \text{ or }i \geq n\})$, so by $(*)_1$ there exists a $\gamma < \kappa_{\ell}$ such that $$\beta_m \in \cl_{\mathbb{M}}(\{\beta_i \mid i \geq n\} \cup \{\gamma\}) \wedge \gamma \in \cl_{\mathbb{M}}(\{\beta_i \mid i \geq n\} \cup \{\beta_m\}).$$  Thus $\gamma \in M_m \cap \kappa_{\ell} \subseteq M_m \cap \beta_m$, so by $(*)_5$ we know $\gamma \in M_n$.  But then $\beta_m \in M_n$, which is a contradiction.  Thus $\boxplus$ is true and $\mathbb{M}$ has an infinite free subset, so $\mathbb{M}_*$ also has an infinite free subset.
\end{proof}

\begin{proof}[Proof of Theorem \ref{Artinfree}]  We already know that $\artin(\lambda) \leq \fr(\lambda)$.  Suppose for contradiction that $\artin(\lambda)  = \kappa < \fr(\lambda)$.  By Lemma \ref{countablecofinality} we know $\kappa$ is regular (and even weakly inaccessible).  By Lemmas \ref{Niceinequality} and \ref{successor} we have that $\artin(\lambda) = \artin(\lambda^+)$, and clearly $\fr(\lambda) \leq \fr(\lambda^+)$, so therefore we may assume that $\lambda$ is uncountable.  For each $\beta < \kappa$ we have $|\beta| < \artin(\lambda)$ and therefore let $\mathbb{N}_{\beta} = (\beta, \{p_{\zeta, \beta}\}_{\zeta < \lambda})$ be an Artinian algebra on the set $\beta$.

We define a sequence of algebras $\mathbb{M}_{\epsilon}$ for $\epsilon \leq \aleph_1$ on $\kappa$ such that $\tau(\mathbb{M}_{\epsilon_0}) \subseteq \tau(\mathbb{M}_{\epsilon_1})$, $\mathbb{M}_{\epsilon_0} = \mathbb{M}_{\epsilon_1} \upharpoonright \tau(\mathbb{M}_{\epsilon_0})$ and $|\tau(\mathbb{M}_{\epsilon_1})| \leq \lambda$ for all $\epsilon_0 \leq \epsilon_1 \leq \aleph_1$.  We will begin with some auxilliary constructions.  Take $\mathbb{M}_{-2} = (\kappa, \mathcal{G}_0)$ to be an algebra such that $|\tau(\mathcal{G}_0)| \leq \lambda$ and having no infinite free subset.  Take $\mathbb{M}_{-1} = (\kappa, \mathcal{G}_1)$ to be an algebra such that $\mathcal{G}_1 \supseteq \mathcal{G}_0$, $|\mathcal{G}_1| \leq \lambda$, and for each $u \in [\kappa]^{< \omega}$ there is $\alpha < \kappa$ such that $u \subseteq \cl_{\mathbb{M}_{-1}}(\{\alpha\})$ (see the proof of Lemma \ref{Niceinequality}).  Finally, for each $0 < n < \omega$ and $\zeta < \lambda$ let $g_{n, \zeta}: \kappa^n \rightarrow \kappa$ satisfy $g_n(\beta_0, \ldots, \beta_{n-1}) = p_{\zeta, \beta_0}(\beta_1, \ldots, \beta_{n-1})$ if $n - 1 = \sigma(p_{\zeta, \beta_0})$, and let $\mathbb{M}_0 = (\kappa, \mathcal{G}_{-1} \cup \{g_{n, \zeta}\}_{0 < n < \omega, \zeta < \lambda})$.

Suppose $\tau(\mathbb{M}_{\epsilon})$ has been defined for all $\epsilon < \delta < \aleph_1$.  If $\delta = \epsilon + 1$ for some $\epsilon < \aleph_1$ then for each $0 < r < \omega$ we define function $h_{r, 0}^{\mathbb{M}_{\epsilon + 1}}: \kappa^r \rightarrow \kappa$ given by $h_{r, 0}^{\mathbb{M}_{\epsilon + 1}}(\beta_0, \ldots, \beta_r) = \alpha$ if $\beta_0 \in \cl_{\mathbb{M}_{\epsilon}}(\{\beta_1, \ldots, \beta_{n - 1}\} \cup \{\alpha\})$, and $\alpha$ is minimal such (note that $\alpha \leq \beta_0$).  For each $r < \omega$ we also define function $h_{r, 1}^{\mathbb{M}_{\epsilon + 1}}: \kappa^r \rightarrow \kappa$ by $h_{r, 0}^{\mathbb{M}_{\epsilon + 1}}(\beta_0, \ldots, \beta_{r - 1}) = \alpha$ where $\alpha < \kappa$ is minimal such that $\{\beta_0, \ldots, \beta_{r - 1}\} \subseteq \cl_{\mathbb{M}_{\epsilon}}(\{\alpha\})$ (such an $\alpha$ exists by how $\mathbb{M}_{-1}$ was defined).  Let $\tau(\mathbb{M}_{\epsilon + 1}) = \tau(\mathbb{M}_{\epsilon}) \cup \{h_{r, 0}^{\mathbb{M}_{\epsilon + 1}}\}_{0 < r < \omega} \cup \{h_{r, 1}^{\mathbb{M}_{\epsilon + 1}}\}_{r < \omega}$.  It is clear that $\tau(\mathbb{M}_{\epsilon}) \subseteq \tau(\mathbb{M}_{\delta})$ and $|\tau(\mathbb{M}_{\delta})| \leq \lambda + \aleph_0 \leq \lambda$.  

If $\delta$ is a limit ordinal let $\tau(\mathbb{M}_{\delta}) = \bigcup_{\epsilon < \delta} \tau(\mathbb{M}_{\delta})$ (using the fact that $\lambda$ is uncountable it is clear that $|\tau(\mathbb{M}_{\delta})| \leq \lambda$ even if $\delta = \aleph_1$).

Taking $\mathbb{M} = \mathbb{M}_{\aleph_1}$, we know that $\mathbb{M}$ has no infinite free subset since $\tau(\mathbb{M}) \supseteq \tau(\mathbb{M}_0)$.  We shall show that $\mathbb{M}$ is in fact Artinian, which will contradict $\kappa = \artin(\lambda)$.  To check that the algebra is Artinian, it obviously suffices to check that for every infinite sequence $\{\alpha_n\}_{n < \omega}$ the sequence of generated algebras $\{\cl_{\mathbb{M}}(\{\alpha_{n}\}_{n \geq m})\}_{m < \omega}$ eventually stabilizes (if $\{N_n\}_{n < \omega}$ is a strictly descending chain of subalgebras then we can select $\alpha_n \in N_n \setminus N_{n + 1}$ and check such a sequence).

Suppose that $\{\alpha_n\}_{n < \omega}$ is a sequence such that $\cl_{\mathbb{M}}(\{\alpha_n\}_{n \geq m})$ strictly includes $\cl_{\mathbb{M}}(\{\alpha_n\}_{n \geq m + 1})$ for each $m < \omega$, with $\sup\{\alpha_n\}_{n < \omega}$ minimal.  We may assume without loss of generality that $\{\alpha_n\}_{n < \omega}$ is strictly increasing.

We point out that $\gamma \in \cl_{\mathbb{M}}\{\alpha_n\}_{n < \omega}$ implies $\gamma < \sup\{\alpha_n\}_{n < \omega}$.  To see this, suppose $\beta \in \cl_{\mathbb{M}}\{\alpha_n\}_{n < \omega}$ is such that $\beta > \alpha_n$ for all $n < \omega$.  By how $\mathbb{M}_0$ is defined, we see that for each $m < \omega$ the intersection $\cl_{\mathbb{M}}(\{\alpha_n\}_{n \geq m}) \cap \beta$ is a subalgebra of the Artinian algebra $\mathbb{N}_\beta$.  Then for some $m < \omega$ we get $\alpha_m \in \cl_{\mathbb{M}}(\{\alpha_n\}_{n \geq m + 1}) \cap \beta$, a contradiction.

Notice as well that $\sup \cl_{\mathbb{M}}\{\alpha_n\}_{n < \omega} < \kappa$ since $\kappa$ is an uncountable regular cardinal.

We claim that there exist $u, A \subseteq \omega$ such that

\begin{enumerate}[(i)]

\item $u$ is finite nonempty;

\item $A \subseteq \omega \setminus (\max(u) + 1)$ is infinite;

\item for each $n \in A$ there exists $n < m \in A$ such that $\alpha_n \in \cl_{\mathbb{M}}(\{\alpha_{\ell} \mid \ell \in (n, m) \cap A \text{ or }\ell \in u\})$.

\end{enumerate}

\noindent To see this, suppose such $u, A$ do not exist.  We define a strictly increasing sequence of natural numbers $\{n_i\}_{i < \omega}$ of natural numbers and a chain $A_0 \supseteq A_1 \supseteq \cdots$ of infinite subsets of $\omega$.  Let $n_0 = 0$ and $A_0 = \omega \setminus \{0\}$.  Then $u = \{n_0\}$ and $A = A_0$ does not satisfy (iii) and we can select $n_1 \in A_0$ such that $$\alpha_{n_1} \notin \cl_{\mathbb{M}}(\{\alpha_{\ell} \mid \ell = n_0 \text{ or }\ell \in A_0 \setminus (n_1 + 1)\}).$$  Let $A_1 = A_0 \setminus (n_1 + 1)$.  Supposing we have already selected $n_0 < \cdots < n_i$ and $A_0 \supseteq \cdots \supseteq A_i$, we know that $u = \{n_0, \ldots, n_i\}$ and $A = A_i$ fail to satisfy (iii) so we select $n_{i + 1} \in A_i$ witnessing this and let $A_{i + 1} = A_i \setminus (n_{i + 1} + 1)$.  It is now easy to check that $\{\alpha_{n_i}\}_{i < \omega}$ is a free set in $\mathbb{M}$, giving a contradiction.  Therefore there exist $u$ and $A$ satisfying (i), (ii), (iii).

We let $u' = \{\alpha_{\ell}\}_{\ell \in u} = \{\beta_0, \ldots, \beta_{j - 1}\}$ with $\beta_i < \beta_{i + 1}$ for all $i < j$.  Clearly there exists some $\epsilon < \aleph_1$ such that for every finite $\underline{u} \subseteq \omega$ and $n < \omega$ we have $\alpha_n \in \cl_{\mathbb{M}}(\{\alpha_{\ell}\}_{\ell \in \underline{u}})$ implies $\alpha_n \in \cl_{\mathbb{M}_{\epsilon}}(\{\alpha_{\ell}\}_{\ell \in \underline{u}})$.  Let $\overline{\gamma} = h_{|u'|, 1}^{\mathbb{M}_{\epsilon + 1}}(\beta_0, \ldots, \beta_{j - 1})$.  As $\overline{\gamma} \in \cl_{\mathbb{M}}(\{\alpha_n\}_{n < \omega})$ we have already seen that $\overline{\gamma} < \sup\{\alpha_n\}_{n < \omega}$.

We can assume that $A \cap (\overline{\gamma} + 1) = \emptyset$, by replacing the original $A$ by $A \setminus (\overline{\gamma} + 1)$.  Note that the strictly increasing sequence $\{\alpha_j'\}_{j < \omega}$ such that $\{\alpha_j'\}_{j < \omega} = \{\alpha_n\}_{n \in A}$ has $\alpha_m' \notin \cl_{\mathbb{M}}(\{\alpha_j'\}_{j \geq m+ 1})$ for each $m < \omega$.

Define a strictly increasing sequence $n_0 < n_1 < \cdots$ in $\omega$ by letting $n_0 = 0$ and if $n_i$ has already been defined we select $n_{i + 1} > n_i$ such that $$\alpha_{n_i}' \in \cl_{\mathbb{M}}(u' \cup \{\alpha_{n_i + 1}', \ldots, \alpha_{n_{i + 1} - 1}'\}).$$  Let $\gamma_i = h_{n_{i + 1} - n_i, 0}^{\mathbb{M}_{\epsilon + 1}}(\alpha_{n_i}', \ldots, \alpha_{n_{i + 1} - 1}'\})$.  Note that $\gamma_i \leq \overline{\gamma}$ for each $i < \omega$ (since $u' \subseteq  \cl_{\mathbb{M}_{\epsilon}}(\overline{\gamma})$).

Notice as well that if $i < j$ then $\gamma_i \neq \gamma_j$, since otherwise we have

$$
\begin{array}{ll}
 \alpha_{n_{i}}' & \in  \hspace{.05cm} \cl_{\mathbb{M}}(\{\alpha_{n_i + 1}', \ldots, \alpha_{n_{i + 1} - 1}'\} \cup \{\gamma_i\}) \\ & \subseteq \cl_{\mathbb{M}}(\{\alpha_{n_i + 1}', \ldots, \alpha_{n_{i + 1} - 1}'\} \cup \{\gamma_j\})\\
&   \subseteq \cl_{\mathbb{M}}(\{\alpha_{n_i + 1}', \ldots, \alpha_{n_{i + 1} - 1}'\} \cup \{\alpha_{n_j}', \ldots, \alpha_{n_{j + 1} - 1}'\})\\
& \subseteq \cl_{\mathbb{M}}(\{\alpha_j'\}_{j > n_i}).
\end{array}
$$

\noindent Also we see that $\gamma_i \notin \cl_{\mathbb{M}}(\{\gamma_j\}_{j > i})$, as otherwise we similarly obtain

$$
\begin{array}{ll}
 \alpha_{n_{i}}' & \in  \hspace{.05cm} \cl_{\mathbb{M}}(\{\alpha_{n_i + 1}', \ldots, \alpha_{n_{i + 1} - 1}'\} \cup \{\gamma_i\}) \\ & \subseteq \cl_{\mathbb{M}}(\{\alpha_{n_i + 1}', \ldots, \alpha_{n_{i + 1} - 1}'\} \cup \{\gamma_j\}_{j > i})\\
& \subseteq \cl_{\mathbb{M}}(\{\alpha_j'\}_{j > n_i}).
\end{array}
$$

Then $\sup\{\gamma_i\}_{i < \omega} \leq \overline{\gamma} < \sup\{\alpha_n\}_{n < \omega}$ and this contradicts the minimality of the ordinal $\sup\{\alpha_n\}_{n < \omega}$.
\end{proof}

We end this section by pointing out the following result.

\begin{theorem}\label{groupsnoinfinitefree}  Let $\kappa$ be an infinite cardinal.  The following are equivalent.

\begin{enumerate}

\item There is a simple torsion-free group of cardinality $\kappa$ having no infinite free subset, and in which every strictly descending chain of noncyclic subgroups is finite.

\item $\kappa < \fr(\aleph_0)$

\end{enumerate}

\end{theorem}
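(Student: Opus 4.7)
The plan is to prove the two implications separately.

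For $(1) \Rightarrow (2)$: If $G$ is a simple torsion-free group of cardinality $\kappa$ with no infinite free subset, then viewing $G$ as a universal algebra with the countable signature $\{\cdot, (\cdot)^{-1}, 1\}$, a free subset in the algebraic sense coincides with a group-theoretic free subset (the subalgebra generated by a set equals the subgroup generated). So $G$ witnesses that $\Fr_{\aleph_0}(\kappa, \aleph_0)$ fails, giving $\kappa < \fr(\aleph_0)$.

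For $(2) \Rightarrow (1)$: By Theorem \ref{Artinfree}, $\kappa < \fr(\aleph_0) = \artin(\aleph_0)$, so fix a countable-signature Artinian algebra $\mathbb{M} = (\kappa, \{j_n\}_{n < \omega})$. I would mirror the construction from the proof of Theorem \ref{equivalence}(1), but take each $G_\alpha$ to be an infinite cyclic group $\cong \mathbb{Z}$ (with generator $z_\alpha$), and invoke Proposition \ref{beautifulObr} with $m = \infty$. The same formula defines a generating function $f$ (the verification uses only that $\cl_{\mathbb{M}}(\tau(X))$ is countable for finite $X$, which still holds), yielding a simple group $G$ of cardinality $\kappa$. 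Torsion-freeness is immediate: by clause (ii) with $m = \infty$, elements not conjugate to some $G_\alpha$-element are of infinite order, while conjugates of nontrivial $G_\alpha$-elements are also of infinite order since $G_\alpha \cong \mathbb{Z}$ is torsion-free.

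The main content is showing $G$ has no infinite free subset. Suppose for contradiction $\{h_n\}_{n < \omega}$ is free and set $K_n = \langle h_m : m \geq n\rangle$. Freeness gives $h_n \notin K_{n+1}$, so $\{K_n\}_{n < \omega}$ is a strictly descending chain of subgroups of $G$. No $K_n$ can be cyclic: if $K_n \leq \langle g \rangle \cong \mathbb{Z}$, writing $h_m = g^{a_m}$ for $m \geq n$, the set $\{a_m\}_{m \geq n}$ would have to be an infinite free subset of $\mathbb{Z}$; but $\gcd$-stabilization rules this out (the partial gcds $\gcd(a_n, \ldots, a_{n+k})$ stabilize to some $d$ which then divides every $a_m$, forcing $a_m \in d\mathbb{Z} = \langle a_{m'} : m' \geq n, m' \neq m\rangle$ for all large $m$). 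Hence by clause (iii) of Proposition \ref{beautifulObr}, each $K_n$ is conjugate to $\langle f(X_n)\rangle$ with $X_n \not\subseteq G_\alpha$ for any $\alpha$.

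At this point the argument in the proof of Theorem \ref{equivalence}(1) applies essentially verbatim: iteratively conjugate via Lemma \ref{gettingout} to produce a descending chain $\{\langle C_n'\rangle\}_{n < \omega}$ of subgroups of $K_0$ with each $C_n' = f(X_n')$ of union-form; then $\{\cl_{\mathbb{M}}(\tau(X_n'))\}_{n < \omega}$ is a strictly descending chain of subalgebras of $\mathbb{M}$, contradicting Artinianness. The thing to double-check is that Lemma \ref{gettingout} transfers unchanged to the $G_\alpha \cong \mathbb{Z}$ setting: the implication ``$G_\beta \cap \langle C\rangle \neq \{1\}$ $\Rightarrow$ $G_\beta \leq \langle C\rangle$'' still follows from clause (iv), since any non-identity $h \in G_\beta \cap \langle C\rangle$ has $f(\{h\}) = \langle h\rangle \setminus \{1\} \subseteq C$, which places $h$ itself in $C$ and hence, by the union-form definition of $f$ on $X \not\subseteq G_\alpha$, forces $G_\beta \setminus \{1\} \subseteq C$. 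I expect this compatibility check, together with ruling out the cyclic case for the $K_n$, to be the only subtle points; everything else descends from the template of Theorem \ref{equivalence}(1).
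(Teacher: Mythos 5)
Your proposal is correct and follows the paper's proof essentially verbatim: the same construction with infinite cyclic $G_\alpha$ and $m=\infty$, the same verification that Lemma \ref{gettingout} transfers, and the same reduction of a putative infinite free subset to a strictly descending chain of subalgebras of $\mathbb{M}$. The only addition is your explicit gcd argument that an infinite cyclic group has no infinite free subset, which the paper simply asserts as clear; both routes are fine.
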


\begin{proof}
It quite clear that (1) implies (2).  Assuming (2), we repeat the construction given in Theorem \ref{equivalence} except that we take $m = \infty$ and each $G_{\alpha} = \langle z_{\alpha} \rangle$ is infinite cyclic.  Since $\kappa < \fr(\aleph_0) = \artin(\aleph_0)$ there exists an Artinian algebra $\mathbb{M} = (\{z_{\alpha}\}_{\alpha < \kappa}, \mathcal{F})$ having countable signature.  Define the generating function $f$ as before and apply Proposition \ref{beautifulObr} to obtain a simple torsion-free group $G$ of cardinality $\kappa$.  Lemma \ref{gettingout} holds in this setting (the proof is the same, word for word).

Suppose for contradiction that $\{g_n\}_{n < \omega}$ is an infinite free subset of $G$.  For each $k < \omega$ we let $H_k = \langle \{g_n\}_{n \geq k} \rangle$.  So $\{H_k\}_{k < \omega}$ is a strictly descending chain of subgroups of $G$, and none of the $H_k$ can be cyclic (since the infinite set $\{g_n\}_{n \geq k}$ is free in $H_k$ and a cyclic group clearly cannot have an infinite free subset).  Using Lemma \ref{gettingout}, inductively conjugate this sequence as before to a strictly descending sequence $K_0 > K_1 > \cdots$ with $K_k = \langle C_k \rangle$ with $C_k = f(X_k)$ for some $X_k \subseteq \Omega$, and $X_k \not\subseteq G_{\alpha}$ for all $\alpha$.  Obtain the same contradiction.  From our proof, every strictly descending chain of noncyclic subgroups in $G$ is finite.  Thus $G$ has the desired properties and (1) holds.

\end{proof}

\end{section}

\begin{section}{Theorems \ref{failure} and \ref{constructibleuniv}} \label{CD}

Now we are ready to prove the remainder of the theorems from known results.  The second author proved the consistency of $\Fr(\aleph_{\omega}, \aleph_0)$ \cite{Sh1} and Koepke obtained the exact consistency strength.

\begin{theorem}\label{Koepke}\cite[Theorem 2.2, 4.4]{Koepke}  The following are equiconsistent.

\begin{enumerate}

\item ZFC + ``there exists a measurable cardinal''

\item ZFC + $\Fr(\aleph_{\omega}, \aleph_0)$

\end{enumerate}

\end{theorem}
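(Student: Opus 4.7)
The plan is to prove the equiconsistency in two directions, each requiring very different set-theoretic machinery.

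For the direction (1) $\Rightarrow$ (2), I would follow the Shelah-type approach of \cite{Sh1}. Start with a ground model of ZFC containing a measurable cardinal $\kappa$ with a normal ultrafilter $\mathcal{U}$. The goal is to force to a model where $\kappa$ becomes $\aleph_{\omega}$ and every algebra of cardinality $\aleph_{\omega}$ with countable signature has an infinite free subset. The natural tool is a combination of Lévy collapses below $\kappa$ (to arrange the cardinals $\aleph_n^{V[G]}$ for $n<\omega$ appropriately) together with a Prikry-style forcing at $\kappa$ that adds a countable cofinal sequence, so that $\kappa$ becomes singular of cofinality $\omega$ and indeed $\aleph_{\omega}$ in the extension. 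The combinatorial input is that a measurable cardinal satisfies strong partition properties — in particular $\kappa \to (\omega)^{<\omega}_{2}$, which is the prototype of a free-set theorem — and the Prikry-like forcing is designed to be mild enough (e.g., having the Prikry property and being $\kappa$-cc in a suitable sense) to transfer such partition properties into the extension, where they manifest as $\Fr(\aleph_{\omega}, \aleph_0)$.

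For the direction (2) $\Rightarrow$ (1), I would argue by the contrapositive using inner model theory. Assume there is no inner model with a measurable cardinal. Then the Dodd--Jensen core model $K$ exists and satisfies Jensen's covering lemma: every uncountable set of ordinals is contained in a set of $K$ of the same cardinality. Using this, I would construct an explicit algebra on $\aleph_\omega$ of countable signature with no infinite free subset, thereby refuting $\Fr(\aleph_\omega, \aleph_0)$. The construction would use the definable well-ordering of $K$ together with $K$-Skolem functions: given any candidate countable subset $X$ of $\aleph_\omega$, covering gives a small $K$-set $Y \supseteq X$ and fine-structural Skolem hull considerations within $Y$ produce a dependency among the elements of $X$, blocking freeness. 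The countably many operations required encode these hulls uniformly.

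The main obstacle I expect is on the lower-bound side: translating the purely combinatorial statement $\Fr(\aleph_\omega,\aleph_0)$ into a statement about inner models requires the fine-structural machinery of $K$ and careful bookkeeping to ensure that the algebra defined from $K$-Skolem functions genuinely has countable signature and does in fact thwart \emph{every} infinite candidate free subset, not only those in a nice class. On the upper-bound side, the subtlety is verifying that the chosen forcing preserves the partition property all the way down to the new $\aleph_\omega$; this is where the detailed choice of forcing (e.g., Prikry-type with a carefully selected system of ultrafilters) becomes essential, and it is the reason the consistency of (2) is precisely at the level of a measurable cardinal rather than something weaker.
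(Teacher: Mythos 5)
The first thing to say is that the paper does not prove this statement at all: it is imported verbatim from Koepke \cite[Theorems 2.2, 4.4]{Koepke} (with the forcing direction building on Shelah \cite{Sh1}), and the paper's ``proof'' is the citation. So there is no in-paper argument to compare yours against; the relevant comparison is with Koepke's original proof.

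Measured against that, your outline correctly identifies the two standard ingredients: a Prikry/collapse-style construction from a measurable for the upper bound, and the Dodd--Jensen core model $K$ with the covering lemma for the lower bound. That is indeed the architecture of the actual proof. But as written your proposal is a research plan, not a proof: in both directions the entire mathematical content is located precisely in the steps you defer. For the upper bound, the hard part is not that a measurable satisfies $\kappa \rightarrow (\omega)^{<\omega}_2$ (it does, being Ramsey) but that the free-subset property survives the passage to the extension where $\kappa$ has become $\aleph_\omega$; Koepke's actual construction works with the $\omega$-th iterated ultrapower and interleaves collapses between the critical points $\kappa_0 < \kappa_1 < \cdots$, using that this critical sequence is a set of indiscernibles over the iterate --- your ``Prikry forcing plus L\'evy collapses, chosen mild enough to transfer partition properties'' names the desired conclusion rather than establishing it. For the lower bound, saying that $K$-Skolem hulls ``produce a dependency among the elements of $X$'' is again the statement to be proved: one must actually exhibit countably many operations on $\aleph_\omega$ and verify, using covering and the fine structure of $K$, that \emph{every} infinite subset fails to be free. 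Since these are exactly the theorems of \cite{Sh1} and \cite{Koepke}, the honest assessment is that your proposal reconstructs the correct strategy at the level of a roadmap, which is acceptable only if (as the paper does) one treats the result as a black box from the literature.
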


\begin{proof}[Proof of Theorem \ref{failure}]  By Theorems \ref{equivalence} and \ref{Artinfree} we know ZFC + $\Fr(\aleph_{\omega}, \aleph_0)$ is equivalent to ZFC + ``there are no Artin groups of size $\aleph_{\omega}$ or higher'', so we conclude by using Theorem \ref{Koepke}.
\end{proof}

Towards Theorem \ref{constructibleuniv} we give the following definition (see, for example, \cite{Silver}).

\begin{definition}\label{Ramseynotation}

For an infinite cardinal $\kappa$ recall that $\kappa \xrightarrow[]{} (\omega)_{\lambda}^{< \omega}$ means that for every function $f: [\kappa]^{< \omega} \rightarrow \lambda$ there exists $\{\alpha_j\}_{j \in \omega} \subseteq \kappa$ such that

\begin{enumerate}[(i)]

\item $(\forall j < \omega) \alpha_j < \alpha_{j + 1}$; and

\item $(\forall m < \omega) f \upharpoonright [\{\alpha_j\}_{j \in \omega}]^m$ is constant.

\end{enumerate}

\end{definition}

The first cardinal $E_0$ for which $E_0 \xrightarrow[]{} (\omega)_2^{< \omega}$ is also known as the \emph{first Erd\H{o}s cardinal} and is inaccessible \cite[\textsection 1]{Dev}, and therefore need not exist in a universe of set theory.  It was shown by Silver (see \cite[Theorem 1.1]{Dev}) that if $\lambda < E_0$ then $E_0 \xrightarrow[]{} (\omega)_{\lambda}^{< \omega}$.  The following proposition makes clear the relationship between $E_0$ and Artinian algebras.

\begin{proposition}\label{Ramseyfreedom}  For each infinite cardinal $\lambda < E_0$ we have $\artin(\lambda) = \fr(\lambda) \leq E_0$.
\end{proposition}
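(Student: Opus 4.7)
The equality $\artin(\lambda) = \fr(\lambda)$ is already given by Theorem \ref{Artinfree}, so the content of the proposition is the inequality $\fr(\lambda) \leq E_0$. By Lemma \ref{freetonoArtinian} (or directly by definition of $\fr$), it suffices to show that every algebra $\mathbb{M} = (E_0, \mathcal{F})$ with $|\mathcal{F}| \leq \lambda$ has an infinite free subset. I would establish this by applying Silver's partition relation $E_0 \to (\omega)_{\lambda}^{<\omega}$ to a judicious coloring that records ``definability'' within $\mathbb{M}$.

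First I would set up the coloring. Let $T$ denote the set of formal terms in the signature $\mathcal{F}$ with variables from $\{x_0, x_1, \ldots\}$; since $\lambda$ is infinite and $|\mathcal{F}| \leq \lambda$, we have $|T| \leq \lambda$. Fix a well-ordering $\prec$ of $T$. For $u = \{\alpha_0 < \cdots < \alpha_{n-1}\} \in [E_0]^n$ with $n \geq 2$ and each $i < n$, define $\chi_i(u)$ to be the $\prec$-least term $t$ of arity $n-1$ with $t(\alpha_0, \ldots, \alpha_{i-1}, \alpha_{i+1}, \ldots, \alpha_{n-1}) = \alpha_i$ if such a term exists, and $\chi_i(u) = *$ otherwise. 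Let $\chi(u) = (\chi_0(u), \ldots, \chi_{n-1}(u))$. The range of $\chi$ sits inside $(T \cup \{*\})^{<\omega}$, which has cardinality $\leq \lambda$, so composing with a bijection into $\lambda$ we obtain a coloring $\chi : [E_0]^{<\omega} \to \lambda$. By $E_0 \to (\omega)_{\lambda}^{<\omega}$, there is a strictly increasing sequence $\{\beta_j\}_{j < \omega} \subseteq E_0$ such that $\chi \upharpoonright [\{\beta_j\}_{j<\omega}]^n$ is constant for each $n < \omega$.

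Next I would show that $Y = \{\beta_j\}_{j < \omega}$ is a free subset of $\mathbb{M}$. Assume for contradiction that some $\beta_{j_*} \in \cl_{\mathbb{M}}(Y \setminus \{\beta_{j_*}\})$. Then there is a finite $v \subseteq Y \setminus \{\beta_{j_*}\}$ and a term witnessing $\beta_{j_*} \in \cl_{\mathbb{M}}(v)$. Let $u' = v \cup \{\beta_{j_*}\}$, $n = |u'|$, and let $i$ be the rank of $\beta_{j_*}$ in the increasing enumeration of $u'$. Then $\chi_i(u') \neq *$, say $\chi_i(u') = t_0$. By homogeneity, for every $n$-subset $\{\gamma_0 < \cdots < \gamma_{n-1}\}$ of $Y$ we get $\gamma_i = t_0(\gamma_0, \ldots, \gamma_{i-1}, \gamma_{i+1}, \ldots, \gamma_{n-1})$. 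If $i = n-1$, fix any $\gamma_0 < \cdots < \gamma_{n-2}$ in $Y$ and pick two distinct elements $\gamma_{n-1} \neq \gamma_{n-1}'$ of $Y$ both exceeding $\gamma_{n-2}$; applying the identity to each gives $\gamma_{n-1} = t_0(\gamma_0, \ldots, \gamma_{n-2}) = \gamma_{n-1}'$, a contradiction. If $i < n-1$, fix any $\gamma_0 < \cdots < \gamma_{i-1}$ in $Y$, pick distinct $\gamma_i \neq \gamma_i'$ in $Y$ exceeding $\gamma_{i-1}$, and then choose $\gamma_{i+1} < \cdots < \gamma_{n-1}$ in $Y$ all exceeding both $\gamma_i$ and $\gamma_i'$; the right-hand side of the identity is identical for both choices, forcing $\gamma_i = \gamma_i'$, again a contradiction.

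Thus $Y$ is an infinite free subset, establishing $\Fr_{\lambda}(E_0, \aleph_0)$ and hence $\fr(\lambda) \leq E_0$. The only real subtlety is engineering the coloring so that (a) its range fits inside $\lambda$, and (b) homogeneity on all finite ranks forces any hypothetical dependence to propagate into a collision of values of an $(n-1)$-ary term on identical inputs. Both points are handled by the componentwise definition of $\chi$ together with the observation that one always has room inside $Y$ to vary a single coordinate while keeping the others fixed. The remainder is routine bookkeeping, and no further input beyond Theorem \ref{Artinfree}, Lemma \ref{freetonoArtinian}, and Silver's theorem is needed.
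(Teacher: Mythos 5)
Your proof is correct and follows essentially the same route as the paper: reduce to showing $\Fr_\lambda(E_0,\aleph_0)$ via Theorem \ref{Artinfree}, apply Silver's relation $E_0 \rightarrow (\omega)_\lambda^{<\omega}$ to a coloring of size $\leq\lambda$ recording how each element of a finite set is generated from the others, and use homogeneity to force two distinct members of the homogeneous set to coincide. The only (cosmetic) difference is that you color by the least witnessing term while the paper colors by the index of the element in a fixed enumeration of the generated subalgebra; both carry the same information and yield the same contradiction.
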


\begin{proof}  Assume the hypotheses.  By Silver's result we know $E_0 \xrightarrow[]{} (\omega)_{\lambda}^{< \omega}$.  For convenience write $\kappa = E_0$.  Suppose that $\mathbb{M} = (\kappa, \mathcal{F})$ is an algebra, with $|\mathcal{F}| \leq \lambda$.  For a finite subset $u \subseteq \kappa$ we let $\{\gamma_{u, \zeta}\}_{\zeta < \ell_u}$ be an enumeration, without repetition, of the elements of the generated subalgebra $\cl_{\mathbb{M}}(u) \subseteq \kappa$, with $\ell_u = |\cl_{\mathbb{M}}(u)| \leq \lambda$.

Let $f: [\kappa]^{< \omega} \rightarrow [\omega \times \lambda]^{<\omega}$ be defined as follows.  If $u = \{\beta_0, \ldots, \beta_{|u| - 1}\}$ where $\beta_j < \beta_{j+1}$ then we let $f(u)$ be the set of all ordered pairs $(i, \zeta)$ such that $i < |u|$, $\zeta < |\cl_{\mathbb{M}}(u \setminus \{\beta_i\})|$ and $\beta_i = \gamma_{u \setminus \{\beta_i\}, \zeta}$.  Note that $|f(u)| \leq |u|$ since the enumeration $\{\gamma_{u, \zeta}\}_{\zeta < \ell_u}$ has no repetition.  Now let $\{\alpha_j\}_{j < \omega}$ be a sequence satisfying (i) and (ii) of Definition \ref{Ramseynotation} for the function $f$ (we are using the fact that $|[\omega \times \lambda]^{< \omega}| = \lambda$).  Let $p \in \omega$ be given and suppose for contradiction that $\alpha_p \in \cl_{\mathbb{M}}(\{\alpha_j\}_{j \in \omega \setminus\{p\}})$.  Select a finite $u' \subseteq \{\alpha_j\}_{j \in \omega \setminus\{p\}}$ with $\alpha_p \in \cl_{\mathbb{M}}(u')$ and let $u = u' \cup \{\alpha_p\}$.  Write $$u = \{\alpha_{j_0}, \alpha_{j_1}, \ldots, \alpha_{j_{i-1}}, \alpha_{j_i}, \alpha_{j_{i+1}}, \ldots, \alpha_{j_{|u| - 1}}\}$$ with $\alpha_{j_s} < \alpha_{j_{s + 1}}$ for all $0 \leq s < |u| - 1$ and $p = j_i$.  Let $\zeta < \lambda$ be such that $\gamma_{u', \zeta} = \alpha_p$.  Then $(i, \zeta) \in f(u)$. Let $$\overline{u} = \{\alpha_0, \ldots, \alpha_{i}, \alpha_{i + 2}, \alpha_{i + 3} \ldots, \alpha_{|u|}\}$$ and $$\underline{u} = \{\alpha_0, \ldots, \alpha_{i - 1}, \alpha_{i + 1}, \alpha_{i + 2}, \ldots, \alpha_{|u|}\}.$$  As $f \upharpoonright [\{\alpha_j\}_{j \in \omega}]^{|u|}$ is constant we have $(i, \zeta) \in f(u) = f(\overline{u}) = f(\underline{u})$.  In particular $$\alpha_i = \gamma_{\{\alpha_0, \ldots, \alpha_{i - 1}, \alpha_{i + 2}, \ldots, \alpha_{|u|}\}, \zeta} = \alpha_{i + 1}$$ which contradicts $\alpha_i < \alpha_{i + 1}$.  Thus $\{\alpha_j\}_{j < \omega}$ is free , and so $\artin(\lambda) = \fr(\lambda) \leq E_0$.

\end{proof}
It is well to note that if $\kappa \xrightarrow[]{} (\omega)_2^{< \omega}$ then in fact $\kappa \xrightarrow[]{} (\omega)_2^{< \omega}$ in the constructible universe \cite{Silver} (for background on the constructible universe $L$  see \cite[\textsection 13]{Jech}).  Thus the existence of an Erd\H{o}s cardinal in general implies that one also exists in $L$.  Devlin and Paris give the following.

\begin{theorem}\label{DevlinwithParis}\cite[Theorem 2]{DevPar}  Assume $V = L$.  For each cardinal $\kappa$ the following are equivalent.

\begin{enumerate}

\item $\kappa \xrightarrow[]{} (\omega)_2^{< \omega}$

\item $\kappa \geq \fr(\aleph_0)$

\end{enumerate}

\end{theorem}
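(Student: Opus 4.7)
The plan is to prove the two implications separately, with only $(2) \Rightarrow (1)$ genuinely requiring $V = L$.

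The direction $(1) \Rightarrow (2)$ is immediate and needs no set-theoretic assumption: if $\kappa \xrightarrow[]{} (\omega)_2^{<\omega}$ then $\kappa \geq E_0$ by the definition of $E_0$ as the least such cardinal, and since $E_0$ is inaccessible (in particular $\aleph_0 < E_0$) Proposition \ref{Ramseyfreedom} applied with $\lambda = \aleph_0$ gives $\fr(\aleph_0) \leq E_0 \leq \kappa$, which is $(2)$.

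For $(2) \Rightarrow (1)$ I would argue the contrapositive, assuming $V = L$ and that $\kappa \xrightarrow[]{} (\omega)_2^{<\omega}$ fails, equivalently $\kappa < E_0$. The task is to build an algebra $\mathbb{M} = (\kappa, \{f_n\}_{n<\omega})$ of countable signature with no infinite free subset, yielding $\kappa < \fr(\aleph_0)$. The natural construction exploits the fact that under $V = L$ the structure $L_{\kappa^+}$ admits definable $\Sigma_1$-Skolem functions: enumerate the formulas $\{\varphi_n\}_{n < \omega}$ in the language of set theory, and for each $\varphi_n$ with, say, $k+1$ free variables let $f_n : \kappa^k \to \kappa$ return the $<_L$-least $\alpha \in \kappa$ satisfying $L_{\kappa^+} \models \varphi_n(\beta_0, \ldots, \beta_{k-1}, \alpha)$ when such an $\alpha$ exists (and $0$ otherwise). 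By design $\cl_{\mathbb{M}}(X)$ equals the intersection with $\kappa$ of the $\Sigma_1$-Skolem hull of $X$ computed inside $L_{\kappa^+}$, so freeness in $\mathbb{M}$ corresponds to freeness with respect to $\Sigma_1$-definability over $L_{\kappa^+}$.

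To show $\mathbb{M}$ has no infinite free subset, I would argue that any infinite free $\{\alpha_i\}_{i<\omega}$ in $\mathbb{M}$ must be a set of $\Sigma_1$-indiscernibles for $L_{\kappa^+}$: if two increasing subtuples of the same length were distinguished by some formula, then one of the $\alpha_i$ would be $\Sigma_1$-definable from the others, contradicting freeness. Under $V = L$ the Silver-style machinery referenced in \cite[\textsection 1]{Dev} then promotes an infinite set of $L_{\kappa^+}$-indiscernibles into a witness of $\kappa \xrightarrow[]{} (\omega)_2^{<\omega}$, via an Ehrenfeucht--Mostowski stretching combined with condensation, contradicting $\kappa < E_0$.

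The main obstacle will be precisely this last step: the bootstrap from a free subset of the algebra to a genuine set of order-indiscernibles and thence to the Erd\H{o}s partition relation on $\kappa$. This is where $V = L$ is indispensable, since condensation, $\Sigma_1$-uniformization, and the fine structure of the constructible hierarchy are all required both to ensure the $L_{\kappa^+}$-Skolem hull has the intended closure properties and to amplify $\Sigma_1$-indiscernibility for $L_{\kappa^+}$ to the full partition relation on $\kappa$. Outside $L$ the same construction still produces a legitimate algebra, but the chain freeness $\Rightarrow$ indiscernibles $\Rightarrow$ Erd\H{o}s partition breaks down, which is consistent with the independence results established earlier in the paper.
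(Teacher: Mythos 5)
First, note that the paper does not prove this statement at all: it is imported verbatim as \cite[Theorem 2]{DevPar}, so there is no internal proof to compare against. The closest internal material is Proposition \ref{Ramseyfreedom}, which (applied with $\lambda = \aleph_0$, using that $E_0$ is the least cardinal satisfying $E_0 \xrightarrow[]{} (\omega)_2^{<\omega}$ and is uncountable) gives exactly your direction $(1) \Rightarrow (2)$, in ZFC and without $V=L$. That half of your argument is correct.

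The direction $(2) \Rightarrow (1)$ has the right skeleton (Skolem-function algebra over a level of $L$, free set $\Rightarrow$ indiscernibles $\Rightarrow$ partition relation), but the pivotal step is justified by a false inference. You claim that if some formula distinguishes two increasing subtuples of a free set $\{\alpha_i\}_{i<\omega}$, then one of the $\alpha_i$ is $\Sigma_1$-definable from the others. It is not: from $\varphi(\alpha_{i_1},\dots,\alpha_{i_n}) \wedge \neg\varphi(\alpha_{j_1},\dots,\alpha_{j_n})$ one can define, say, the $<_L$-least witness to $\varphi$ over the remaining parameters, but there is no reason that witness is any $\alpha_i$; so freeness is not contradicted. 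Nor can the step be repaired by Ramsey's theorem plus diagonalization: stabilizing infinitely many formulas of unbounded arity on a single infinite set is precisely the obstruction that makes $\omega \not\xrightarrow[]{} (\omega)_2^{<\omega}$, and indeed ``every algebra of countable signature on $\kappa$ has an infinite set of order-indiscernibles'' is essentially a reformulation of $\kappa \xrightarrow[]{} (\omega)_2^{<\omega}$ itself, so any derivation of indiscernibility that does not use freeness together with condensation in an essential way is circular. The actual Devlin--Paris argument extracts from a free set a \emph{good} (remarkable) set of indiscernibles by comparing the transitive collapses of the Skolem hulls of different tuples and invoking the uniqueness of the $L$-hierarchy; both this extraction and the subsequent passage to homogeneous sets for arbitrary partitions (which requires indiscernibility over the parameters defining the partition, not bare indiscernibility for $L_{\kappa^+}$) are where the substance lies, and your proposal leaves both as gaps.
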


Using Theorems \ref{equivalence}, \ref{Artinfree} and \ref{DevlinwithParis} we obtain the following.

\begin{theorem}\label{constructiblecutoff}  Assume $V = L$. For each infinite cardinal $\kappa$ the following are equivalent.

\begin{enumerate}

\item $\kappa \xrightarrow[]{} (\omega)_2^{< \omega}$

\item There is no Artinian group of cardinality $\kappa$.

\end{enumerate}

\end{theorem}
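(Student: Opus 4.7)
My plan is to chain the three previous theorems together so that the statement reduces to a transparent fact about $\artin(\aleph_0)$. Working under $V = L$, Theorem~\ref{DevlinwithParis} rewrites condition (1) as $\kappa \geq \fr(\aleph_0)$, and Theorem~\ref{Artinfree} converts this to $\kappa \geq \artin(\aleph_0)$. So everything reduces to showing that the latter inequality is equivalent to condition (2); this reduction is what I would state first, after which the proof splits into the two implications.

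For the forward direction I would observe that a group, regarded as a universal algebra with the finite signature consisting of multiplication, inversion, and the identity constant, has subalgebras coinciding with its subgroups, so that being Artinian as a group and as an algebra coincide. The signature is finite, hence countable, so the introduction's cutoff characterization of $\artin(\aleph_0)$ (algebras of countable signature at or above $\artin(\aleph_0)$ are never Artinian) immediately rules out any Artinian group of cardinality $\kappa \geq \artin(\aleph_0)$.

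For the reverse direction I would use the other half of that same cutoff: when $\kappa < \artin(\aleph_0)$ there exists an Artinian algebra of cardinality $\kappa$ with countable signature, so hypothesis (b) of Theorem~\ref{equivalence}(1) holds. Fixing an odd $m$ large in the sense of Proposition~\ref{beautifulObr}, the implication (b)$\Rightarrow$(a) of that theorem yields a simple Artinian group of cardinality $\kappa$ and exponent $m$, which is the Artinian group required.

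No step is a genuine obstacle; the argument is bookkeeping connecting Erd\H{o}s cardinals in $L$ (through Devlin--Paris) to Artinian algebras (through Theorem~\ref{Artinfree}) and finally to Artinian groups (through Obraztsov's embedding machinery packaged in Theorem~\ref{equivalence}). The only point I would double-check is that subalgebras of a group in the universal-algebra sense with the chosen signature really do coincide with its group-theoretic subgroups, so that the two notions of Artinian agree on the nose.
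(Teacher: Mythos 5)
Your proposal is correct and follows essentially the same route as the paper, which derives Theorem \ref{constructiblecutoff} precisely by chaining Theorem \ref{DevlinwithParis} (rewriting $\kappa \xrightarrow[]{} (\omega)_2^{< \omega}$ as $\kappa \geq \fr(\aleph_0)$ under $V=L$), Theorem \ref{Artinfree} ($\fr(\aleph_0)=\artin(\aleph_0)$), and the two directions of Theorem \ref{equivalence}(1) together with the cutoff characterization of $\artin(\aleph_0)$. The point you flag for double-checking --- that subalgebras of a group under the signature $\{\cdot,\ {}^{-1},\ 1\}$ are exactly its subgroups --- is indeed the (trivial) content of the implication (a) $\Rightarrow$ (b) in Theorem \ref{equivalence}(1), so nothing is missing.
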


\begin{proof}[Proof of Theorem \ref{constructibleuniv}]  We know that there are Artinian groups of each finite nonzero cardinality, since finite groups are Artinian.  If $V = L$ and there is no Erd\H{o}s cardinal then for each infinite cardinal $\kappa$ there exists an Artinian group of cardinality $\kappa$, by Theorem \ref{constructiblecutoff} and Theorem \ref{equivalence}.  On the other hand, in case the first Erd\H{o}s cardinal $E_0$ exists in $L$, notice that $(L_{E_0}, \in)$ is a model of ZFC + $V = L$, since $E_0$ is inaccessible.  Also, for any infinite cardinal $\kappa$ and function $f: [\kappa]^{< \omega} \rightarrow \{0, 1\}$ in $L_{E_0}$ we have $f \in L_{\kappa^+}$.  Thus if $\kappa \xrightarrow[]{} (\omega)_2^{< \omega}$ in $L_{E_0}$ for some cardinal $\kappa \in L_{E_0}$, then $\kappa \xrightarrow[]{} (\omega)_2^{< \omega}$ in $L$, contradicting the minimality of $E_0$.  So $(L_{E_0}, \in)$ is a model of ZFC + ($V = L$) in which there is no Erd\H{o}s cardinal and we are in the first case.
\end{proof}

\end{section}


\begin{thebibliography}{a}

\bibitem{C} S. Corson, \emph{J\'onsson groups of various cardinalities}, Proc. Amer. Math. Soc. 150 (2022), 2771-2775.

\bibitem{Dev}  K. J. Devlin, \emph{Some weak versions of large cardinal axioms}, Ann. Math. Log. 5 (1973), 291-325.

\bibitem{DevPar} K. J. Devlin, J. B. Paris, \emph{More on the free subset problem}, Ann. Math. Log. 5 (1973), 327-336.

\bibitem{J} B. J\'onsson, Topics in universal algebra, Lecture Notes in Mathematics, Vol 250, Springer-Verlag, Berlin 1972.

\bibitem{Jech} T. Jech, Set Theory: The Third Millenium Edition, Revised and Expanded, Springer 2000.

\bibitem{KhMaz}  E. I. Khukhro and V. D. Mazurov, editors, \emph{Unsolved problems in group theory, the Kourovka notebook}, 15th edition (2002). Sobolev Institute of Mathematics, Russian Academy of Sciences, Siberian Branch, Novosibirsk.

\bibitem{Koepke}  P. Koepke, \emph{The consistency strength of the free-subset property for $\omega_{\omega}$}, J. Symb, Logic 49 (1984), 1198-1203.

\bibitem{Obr1} V. N. Obraztsov, \emph{An embedding theorem for groups and its corollaries}, Mat. Sb. 180 (1989), 529-541.

\bibitem{Obr2}  V. N. Obraztsov, \emph{A new embedding scheme for groups and some applications}, J. Austral. Math. Soc. (Series A) 61 (1996), 267-288.

\bibitem{Ol0} A. Yu. Ol'shanskii, \emph{An infinite group with subgroups of prime orders}, Izv. Akad. Nauk SSSR, Ser. Mat. 44 (1980), 309-321.

\bibitem{Ol} A. Yu. Ol'shanskii, Geometry of Defining Relations in Groups, Kluwer Academic Publisher (1991).

\bibitem{Sh1} S. Shelah, \emph{Independence of strong partition relation for small cardinals, and the free-subset problem}, J. Symb. Log. 45 (1980), 505-509.

\bibitem{Silver} J. H. Silver, \emph{A large cardinal in the constructible universe}, Fund. Math. 69 (1970), 93-100.

\bibitem{Silver2} J. H. Silver, \emph{Some applications of model theory in set theory}, Ann. Math. Log. 3 (1971), 45-110. 

\bibitem{Wh} T. P. Whaley, \emph{Algebras satisfying the descending chain condition for subalgebras}, Pac. J. Math. 28 (1969), 217-223.


\end{thebibliography}
\end{document}